\newtheorem{theorem}{Theorem}[section]
\newtheorem{proposition}[theorem]{Proposition}
\newtheorem{lemma}[theorem]{Lemma}
\newtheorem{corollary}[theorem]{Corollary}
\theoremstyle{definition}
\newtheorem{definition}[theorem]{Definition}
\newtheorem{example}[theorem]{Example}
\newtheorem{remark}[theorem]{Remark}
\newtheorem{question}[theorem]{Question}
\numberwithin{equation}{section}
\begin{document}
\title[]
{Compact complement topologies and k-spaces}
\author{K. Keremedis, C. \"{O}zel, A. Pi\k{e}kosz, M. A. Al Shumrani, E. Wajch}
\address{Department of Mathematics,
University of Aegean,
Karlovassi, Samos 83200, Greece}
\email{kker@aegean.gr}
\address{Department of Mathematics, King Abdulaziz University,
P.O.Box: 80203 Jeddah 21589, Saudi Arabia}
\email{cenap.ozel@gmail.com}
\address{Institute of Mathematics, Cracow University of Technology, Warszawska 24, 31-155 Krak\'ow, Poland}
\email{pupiekos@cyfronet.pl}
\address{Department of Mathematics, King Abdulaziz University,
P.O.Box: 80203 Jeddah 21589, Saudi Arabia}
\email{maalshmrani1@kau.edu.sa}
\address{Siedlce University of Natural Sciences and Humanities, 3 Maja 54, 08-110 Siedlce, Poland}
\email{eliza.wajch@wp.pl}

\thanks{}

\subjclass{Primary: 54D50, 54D55, 54A35, 54E99. Secondary: 54D30,  54E35, 54E25}
\date{\today}

\begin{abstract}
Let $(X,\tau)$ be a Hausdorff space, where $X$ is an infinite set. The compact complement topology $\tau^{\star}$ on $X$ is defined by: $\tau^{\star}=\{\emptyset\} \cup \{X\setminus M, \text{where $M$ is compact in $(X,\tau)$}\}$. In this paper,  properties of the space $(X, \tau^{\star})$ are studied in $\mathbf{ZF}$ and applied to a characterization of $k$-spaces, to the Sorgenfrey line, to some statements independent of $\mathbf{ZF}$, as well as to partial topologies that are among Delfs-Knebusch generalized topologies. Between other results, it is proved that the axiom of countable multiple choice (\textbf{CMC}) is equivalent with each of the following two sentences: (i) every Hausdorff first countable space is a $k$-space, (ii) every metrizable space is a $k$-space. A \textbf{ZF}-example of a countable metrizable space whose compact complement topology is not first countable is given.

{\bf Keywords:} compact complement topology; countable multiple choice; $k$-space; sequential space;  Sorgenfrey line; Delfs-Knebusch generalized topological space; partial topology.

\end{abstract}

\maketitle

\section{Introduction}
The compact complement topology of the real line was considered, for instance, in Example 22 of the celebrated book by Steen and Seebach"Counterexamples in Topology" (\cite{SS}). We investigate this notion in a much wider context of Hausdorff spaces and of partially topological spaces that belong to the class of generalized topological spaces in the sense of Delfs-Knebusch (cf. \cite{DK} and \cite{Pie1}). Our results are proved in $\mathbf{ZF}$ if this is not otherwise stated. All axioms of $\mathbf{ZF}$ can be found in \cite{Ku}.

In section 2, we give elementary properties of the compact complement topology of a Hausdorff space. In particular, we show that if a Hausdorff space is locally compact and second countable, then its compact complement topology is second countable, while the compact complement topology of a non-locally compact metrizable space need not be first countable. We give an example of a countable metrizable space whose compact complement topology is not first countable. In section 3, a necessary and sufficient condition for a set to be compact with respect to the compact complement topology of a given Hausdorff space leads us to a new characterization of k-spaces. A well-known theorem of $\mathbf{ZFC}$ states that all first countable Hausdorff spaces are $k$-spaces (cf. Theorem 3.3.20 of \cite{E}). We show that this theorem may fail in $\mathbf{ZF}$. More precisely, we prove that, if $\mathcal{M}$ is a model of $\mathbf{ZF}$, then all Hausdorff first countable spaces of $\mathcal{M}$ are k-spaces if and only if all metrizable spaces in $\mathcal{M}$ are $k$-spaces which holds if and only if the axiom of countable multiple choice (Form 126 in \cite{HR}) is true in $\mathcal{M}$. In consequence, in some models of $\mathbf{ZF}$ there are metrizable spaces that are not $k$-spaces. We prove that if the Sorgenfrey line is a $k$-space, then the real line with its natural topology is sequential, so the Sorgenfrey line fails to be a $k$-space in some models of $\mathbf{ZF}$. In section 4, we introduce a notion of a compact complement partial topology corresponding to a given partial topology. Partially topological Delfs-Knebusch generalized topological spaces were introduced in Definition 2.2.67 of  \cite{Pie1}; however,  a more convenient than in \cite{Pie1} definition of a partial topology was given in \cite{OPSW}. 

\section{Basic properties of compact complement topologies}

Throughout this article, we assume that $\tau$ is a topology on an infinite set $X$ such that $(X,\tau)$ is a Hausdorff space.

\begin{definition} \label{star}
 We denote by $\mathcal{K}(\tau)$ the collection of all $\tau$-compact sets, i.e. of all sets that are compact in the space $(X, \tau)$. The \emph{compact complement topology} of $(X, \tau)$ is the collection
$$\tau^{\star}=\{\emptyset\} \cup \{X\setminus M: M\in\mathcal{K}(\tau)\}.$$
\end{definition}

Since it is true in $\mathbf{ZF}$ that a compact subspace of a Hausdorff space is closed (see Theorem 3.1.8 of \cite{E}), it is easy to check in $\mathbf{ZF}$ that $\tau^{\star}$ is a topology on $X$.

For a subset $Y$ of $X$ and a topology $\mathcal{T}$ on $X$, let
$$\mathcal{T}\vert Y=\{V\cap Y: V\in\mathcal{T}\}.$$
Then $(Y, \mathcal{T}\vert Y)$ is a topological subspace of $(X, \mathcal{T})$.

\begin{theorem}\label{theorem1} Let $Y\subseteq X$. The following conditions hold in $\mathbf{ZF}$:
\begin{enumerate}
\item[(i)] $\tau^{\star}\vert Y$ is coarser than $\tau\vert Y$, i.e., $\tau^{\star}\vert Y \subseteq \tau\vert Y$;
\item[(ii)] if $Y$ is compact in $(X, \tau)$, then $\tau^{\star}\vert Y=\tau\vert Y$;
\item[(iii)] $\tau^{\star}\vert Y=\tau\vert Y$ if and only if there exists a $\tau$-compact set $C$ such that $Y\subseteq C$.
\end{enumerate}
\end{theorem}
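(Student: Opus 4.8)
The plan is to reduce everything to two facts: that $\tau^{\star}\subseteq\tau$, and a clean description of when restricting to $Y$ undoes the coarsening. For (i) I would first observe $\tau^{\star}\subseteq\tau$: a nonempty member of $\tau^{\star}$ is $X\setminus M$ with $M\in\mathcal{K}(\tau)$, and $M$ is $\tau$-closed because $(X,\tau)$ is Hausdorff (Theorem 3.1.8 of \cite{E}), so $X\setminus M\in\tau$; intersecting with $Y$ gives $\tau^{\star}\vert Y\subseteq\tau\vert Y$. For (ii), given (i) I only need $\tau\vert Y\subseteq\tau^{\star}\vert Y$ when $Y$ is $\tau$-compact: for $O\in\tau$ the set $M:=Y\setminus O$ is closed in the compact space $(Y,\tau\vert Y)$, hence compact, hence $M\in\mathcal{K}(\tau)$, and $(X\setminus M)\cap Y=O\cap Y$. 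All of this is available in $\mathbf{ZF}$ (``closed subspace of a compact space is compact'' and ``finite subsets are compact'' need no choice).

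For (iii), the implication ``$\Leftarrow$'' is formal: if $Y\subseteq C$ with $C\in\mathcal{K}(\tau)$, then by (ii) $\tau^{\star}\vert C=\tau\vert C$, and restricting this equality to the subspace $Y$ of $C$ — using $Y\subseteq C$, so that $(\mathcal{T}\vert C)\vert Y=\mathcal{T}\vert Y$ for every topology $\mathcal{T}$ on $X$ — yields $\tau^{\star}\vert Y=\tau\vert Y$.

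The substance, and the step I expect to be the main obstacle, is ``$\Rightarrow$'', because $Y$ itself need not be $\tau$-compact (think of a bounded open interval of the real line). I would aim to show that the $\tau$-closure $\overline{Y}$ is $\tau$-compact and then take $C=\overline{Y}$. The key preliminary remark is: if $\tau^{\star}\vert Y=\tau\vert Y$, then for every $\tau$-closed set $A$ with $A\cap Y\neq Y$, the set $\overline{A\cap Y}$ is $\tau$-compact. Indeed, $W:=Y\setminus A$ is then a nonempty member of $\tau\vert Y=\tau^{\star}\vert Y$, so $W=(X\setminus M)\cap Y$ for some $M\in\mathcal{K}(\tau)$; hence $A\cap Y=Y\setminus W\subseteq M$, and since $M$ is $\tau$-closed, $\overline{A\cap Y}$ is a $\tau$-closed subset of the compact set $M$, so compact.

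With this in hand I would argue by contradiction: suppose $\overline{Y}$ is not $\tau$-compact. Then $Y$ has at least two points (a finite subset of a Hausdorff space is compact and closed, so a finite $Y$ would make $\overline{Y}=Y$ compact), so pick distinct $p,q\in Y$ and, by Hausdorffness, disjoint $\tau$-open $U\ni p$ and $V\ni q$. Put $A=X\setminus U$ and $B=X\setminus V$; these are $\tau$-closed with $A\cup B=X$, and neither contains $Y$ (since $p\in Y\cap U$, $q\in Y\cap V$). By the remark, $\overline{A\cap Y}$ and $\overline{B\cap Y}$ are $\tau$-compact, so their union $K$ is $\tau$-compact and $\tau$-closed; since $Y=(A\cap Y)\cup(B\cap Y)\subseteq K$ and $K$ is closed, $\overline{Y}\subseteq K$, so $\overline{Y}$ is a closed subset of a compact set and therefore $\tau$-compact — a contradiction. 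Hence $\overline{Y}\in\mathcal{K}(\tau)$, and $C:=\overline{Y}$ witnesses (iii). Once more, the only facts used — ``closed subset of compact is compact'', ``finite union of compact sets is compact'', ``finite sets are compact'' — are $\mathbf{ZF}$-valid, so the whole argument stays within $\mathbf{ZF}$.
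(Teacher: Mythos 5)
Your proof is correct, and (i), (ii) and the ``$\Leftarrow$'' half of (iii) coincide with the paper's argument; the difference lies in how you handle ``$\Rightarrow$'' of (iii). The paper picks a $\tau$-open $V$ with $\emptyset\neq V\cap Y\neq Y$, writes $V\cap Y=Y\setminus K_0$ with $K_0$ $\tau$-compact, and then uses the Hausdorff separation of the \emph{point} $x_0\in V\cap Y$ from the \emph{compact set} $K_0$ to get disjoint open $U_1,U_2$; representing $U_i\cap Y=Y\setminus K_i$ gives $Y\subseteq K_1\cup K_2$. You instead separate two \emph{points} of $Y$, apply your preliminary remark (the trace on $Y$ of a $\tau$-closed set not containing all of $Y$ sits inside a $\tau$-compact set, hence has compact closure) to the two complementary closed sets $X\setminus U$ and $X\setminus V$, and conclude that $\overline{Y}$ itself is compact. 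The two arguments share the same core mechanism --- cover $Y$ by the complements of two disjoint open sets, each trapped in a compact set because every nonempty member of $\tau^{\star}\vert Y$ has the form $Y\setminus K$ with $K\in\mathcal{K}(\tau)$ --- but your version uses only two-point Hausdorff separation rather than point--compact separation, produces the explicit witness $C=\overline{Y}$ (slightly stronger: the closure is compact, not merely ``$Y$ is contained in some compact set''), and, via the contradiction set-up, explicitly disposes of the degenerate case $\lvert Y\rvert\le 1$, which the paper's choice of $V$ with $\emptyset\neq V\cap Y\neq Y$ passes over silently. All the facts you invoke (closed subsets of compact sets are compact, finite unions of compact sets are compact, compact subsets of Hausdorff spaces are closed) are indeed $\mathbf{ZF}$-valid, so the proof stays within $\mathbf{ZF}$ as required.
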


\begin{proof} To prove (i), it suffices to show that $\tau^{\star}\subseteq \tau$. Let $U \in \tau^{\star}$ and $U\neq\emptyset$. Then $U=X\setminus M$ for a compact subspace $M$ of $(X,\tau)$. Since a compact subspace of a Hausdorff space is closed, $M$ is closed in $(X,\tau)$. Hence, $U \in \tau$ and, in consequence, $\tau^{\star}\subseteq \tau$.

(ii) Suppose that $Y$ is $\tau$-compact and $V\in \tau$. Since $(X, \tau)$ is Hausdorff, the set $Y$ is $\tau$-closed, so $A=Y\cap (X\setminus V)$ is a $\tau$-closed subset of the $\tau$-compact set $Y$. Hence, $A$ is $\tau$-compact. Notice that $V\cap Y=Y\cap(X\setminus A)$. This implies that $V\cap Y\in\tau^{\star}\vert Y$ and $\tau\vert Y\subseteq\tau^{\star}\vert Y$.

(iii) If $C$ is a $\tau$-compact set such that $Y\subseteq C$, then since it follows from (ii) that $\tau\vert C=\tau^{\star}\vert C$, we immediately deduce that $\tau\vert Y=\tau^{\star}\vert Y$. Finally,  suppose that $Y$ is a subset of $X$ such that $\tau^{\star}\vert Y=\tau\vert Y$. Let $V\in\tau$ and $\emptyset\neq V\cap Y\neq Y$. Since $V\cap Y\in\tau^{\star}\vert Y$, there exists a $\tau$-compact set $K_0$ such that $V\cap Y=Y\setminus K_0$. Fix $x_0\in V\cap Y$. Then $x_0\notin K_0$. By the $\tau$-compactness of $K_0$, there exists a pair $U_1, U_2$ of disjoint members of $\tau$ such that $x_0\in U_1$ and $K_0\subseteq U_2$. Of course, $U_2\cap Y\neq\emptyset$ because $V\cap Y\neq Y$. Since $U_1\cap Y$ and $U_2\cap Y$ are both in $\tau^{\star}\vert Y$, there exist $\tau$-compact sets $K_1, K_2$ such that $U_i\cap Y=Y\setminus K_i$ for $i\in\{1,2\}$. Let $K=K_1\cup K_2$. Then $K$ is $\tau$-compact and $Y=Y\setminus (U_1\cap U_2)=(Y\setminus U_1)\cup (Y\setminus U_2)\subseteq K_1\cup K_2=K$.
\end{proof}

\begin{corollary} $(X, \tau)$ is compact if and only if $\tau=\tau^{\star}$.
\end{corollary}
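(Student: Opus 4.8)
The plan is to derive this directly from Theorem~\ref{theorem1} by specializing to $Y = X$, in which case $\tau^{\star}\vert X = \tau^{\star}$ and $\tau\vert X = \tau$, so the subspace statements become statements about the whole space.

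For the forward implication, I would argue as follows. Assume $(X,\tau)$ is compact. Then $X$ is itself a $\tau$-compact subset of $X$, so Theorem~\ref{theorem1}(ii), applied with $Y = X$, yields $\tau^{\star}\vert X = \tau\vert X$, that is, $\tau^{\star} = \tau$. (Equivalently, one can invoke part (iii) with $Y = X$ and the $\tau$-compact set $C = X$.)

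For the converse, suppose $\tau = \tau^{\star}$. Applying Theorem~\ref{theorem1}(iii) with $Y = X$, the equality $\tau^{\star}\vert X = \tau\vert X$ guarantees the existence of a $\tau$-compact set $C$ with $X \subseteq C$. Since trivially $C \subseteq X$, this forces $C = X$, so $X$ is $\tau$-compact, i.e. $(X,\tau)$ is compact. Alternatively, without appealing to (iii): if $\tau = \tau^{\star}$ then $X \in \tau = \tau^{\star}$, and since $X \neq \emptyset$, by the definition of $\tau^{\star}$ there is a $\tau$-compact set $M$ with $X = X \setminus M$, forcing $M = \emptyset$; this route only shows $X$ is a complement of a compact set, so the cleaner argument is the one via part (iii), which actually extracts compactness of $X$ itself.

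There is no real obstacle here: the corollary is an immediate consequence of Theorem~\ref{theorem1}, and the only thing to be careful about is choosing the part of the theorem that genuinely delivers compactness of $X$ (part (iii), or the combination of (i) and (ii)) rather than a weaker reformulation. I would therefore keep the proof to two or three short sentences.
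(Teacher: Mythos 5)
Your argument is correct and is exactly the intended derivation: the paper states the corollary without proof as an immediate consequence of Theorem~\ref{theorem1}, and specializing parts (ii) and (iii) to $Y=X$ (where $\tau^{\star}\vert X=\tau^{\star}$ and $\tau\vert X=\tau$) gives both implications just as you describe. Your remark that the naive route via the definition of $\tau^{\star}$ only yields $M=\emptyset$ and not compactness of $X$ is also a correct observation, and part (iii) is indeed the right tool for the converse.
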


\begin{remark}  \label{2.4}
In general, $\tau^{\star}\vert Y$ is not equal to $(\tau \vert Y)^{\star}$. For instance, if $Y$ is the open interval $(0, 1)$ of $\mathbb{R}$, while $\tau_{nat}$ is the usual topology of $\mathbb{R}$, then $\tau_{nat}^{\star}\vert Y\neq(\tau_{nat} \vert Y)^{\star}$ . If this does not lead to misunderstanding, we shall denote the space $(\mathbb{R}, \tau_{nat})$ by $\mathbb{R}$ and call it the real line.
\end{remark}

From the fact that $\tau^{\star}\subseteq \tau$, we have the following obvious results:

\begin{proposition} (i) If $(X,\tau)$ is separable,  so is $(X, \tau^{\star})$.\\

(ii) If $(X, \tau)$ is hereditarily separable, so is $(X, \tau^{\star})$.\\

(iii)  If $(X, \tau)$ is hereditarily Lindel\"of, so is $(X, \tau^{\star})$.\\

(iv) If $(X,\tau)$ is connected, so is $(X, \tau^{\star})$.
\end{proposition}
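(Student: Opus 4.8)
The plan is to obtain all four items from a single fact already contained in Theorem~\ref{theorem1}: not only is $\tau^{\star}$ coarser than $\tau$, but in fact $\tau^{\star}\vert Y\subseteq\tau\vert Y$ for every $Y\subseteq X$ (part (i)). Each of separability, hereditary separability, the hereditary Lindel\"of property and connectedness is preserved when one replaces a topology by a coarser one on the same set, and I would simply verify this, taking care that every witnessing object is extracted directly from the given data so that the argument stays within $\mathbf{ZF}$.

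First I would treat the ``global'' properties. For (i), if $D$ is a countable $\tau$-dense subset of $X$ and $\emptyset\neq U\in\tau^{\star}$, then $U\in\tau$ by Theorem~\ref{theorem1}(i), so $U\cap D\neq\emptyset$; hence $D$ is $\tau^{\star}$-dense and $(X,\tau^{\star})$ is separable. For (iv), any decomposition $X=U\cup V$ with $U,V$ nonempty disjoint members of $\tau^{\star}$ would be a decomposition into nonempty disjoint members of $\tau$, contradicting connectedness of $(X,\tau)$; hence $(X,\tau^{\star})$ is connected.

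For the hereditary properties (ii) and (iii) I would fix an arbitrary $Y\subseteq X$ and use $\tau^{\star}\vert Y\subseteq\tau\vert Y$. In case (ii), hereditary separability of $(X,\tau)$ yields a countable set $D\subseteq Y$ dense in $(Y,\tau\vert Y)$, which is then dense in the coarser space $(Y,\tau^{\star}\vert Y)$; since $Y$ was arbitrary, $(X,\tau^{\star})$ is hereditarily separable. In case (iii), any cover of $Y$ by members of $\tau^{\star}\vert Y$ is a cover of the Lindel\"of space $(Y,\tau\vert Y)$ by members of $\tau\vert Y$, so it admits a countable subcover; thus every subspace of $(X,\tau^{\star})$ is Lindel\"of.

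There is essentially no obstacle beyond this bookkeeping: in each case the conclusion is reached by reusing a dense set, a countable subcover, or a separation already available for $(X,\tau)$, so no choice principle enters and the arguments are valid in $\mathbf{ZF}$. The only point worth making explicit is that the hereditary statements require the subspace version $\tau^{\star}\vert Y\subseteq\tau\vert Y$ rather than merely $\tau^{\star}\subseteq\tau$, and this is exactly what Theorem~\ref{theorem1}(i) provides.
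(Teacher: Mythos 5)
Your proposal is correct and follows exactly the paper's route: the paper derives all four items as immediate consequences of $\tau^{\star}\subseteq\tau$ (with the subspace inclusion $\tau^{\star}\vert Y\subseteq\tau\vert Y$ from Theorem~\ref{theorem1}(i) handling the hereditary cases), which is precisely the argument you spell out. Your verification that no choice principle is needed is accurate, and the details you add are just the routine bookkeeping the paper leaves implicit.
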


The statement that every infinite set is Dedekind infinite (Form 9 in \cite{HR}) is denoted by \textbf{Fin} in Definition 2.13 of \cite{Her}.

\begin{theorem} The following sentences are equivalent in $\mathbf{ZF}$:
\begin{enumerate}
\item[(i)] \textbf{Fin}.
\item[(ii)] For every discrete space $(X, \tau)$, the space $(X, \tau^{\star})$ is hereditarily separable.
\item[(iii)] For every uncountable discrete space $(X, \tau)$, the space $(X, \tau^{\star})$ is separable.
\end{enumerate}
\end{theorem}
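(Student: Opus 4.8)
The plan is to reduce everything to the behaviour of the cofinite topology, since on a discrete space the compact complement topology is exactly the cofinite topology. First I would record the following observations, all valid in $\mathbf{ZF}$. If $(X,\tau)$ is discrete, then the cover of a set $K$ by its singletons has a finite subcover if and only if $K$ is finite, so $\mathcal{K}(\tau)$ is precisely the collection of finite subsets of $X$; hence $\tau^{\star}$ is the cofinite topology on $X$. Moreover, for every $Y\subseteq X$ the subspace topology $\tau^{\star}\vert Y$ is the cofinite topology on $Y$ when $Y$ is infinite, and is the discrete topology on $Y$ when $Y$ is finite.

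Next I would prove the key computational lemma: for an infinite set $Z$ with its cofinite topology, a subset $D\subseteq Z$ is dense if and only if $D$ is infinite. Indeed, the nonempty open sets are exactly the sets $Z\setminus F$ with $F$ finite, and $D\cap(Z\setminus F)\neq\emptyset$ for all finite $F$ precisely when $D$ is contained in no finite set. Consequently, since a finite set is never dense in an infinite cofinite space, the cofinite topology on an infinite set $Z$ is separable if and only if $Z$ has a countably infinite subset, i.e.\ if and only if $Z$ is Dedekind infinite.

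With these two facts the equivalences are short. For (i)$\Rightarrow$(ii): assuming \textbf{Fin}, every infinite $Y\subseteq X$ is Dedekind infinite, so by the lemma and the first observation every subspace of $(X,\tau^{\star})$ is separable; hence $(X,\tau^{\star})$ is hereditarily separable. The implication (ii)$\Rightarrow$(iii) is immediate, as an uncountable discrete space is in particular an infinite discrete space and hereditary separability implies separability. For (iii)$\Rightarrow$(i): let $Z$ be an arbitrary infinite set. If $Z$ injects into $\omega$, then, being infinite, $Z$ is countably infinite, hence Dedekind infinite. If $Z$ does not inject into $\omega$, then $(Z,\mathcal{P}(Z))$ is an uncountable discrete space, so by (iii) the space $(Z,\tau^{\star})$, which is $Z$ with the cofinite topology, is separable, and the lemma gives that $Z$ is Dedekind infinite. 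In either case $Z$ is Dedekind infinite, which is \textbf{Fin}.

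The only slightly delicate point, and the one I would be most careful about, is the passage in the last implication from ``every infinite set'' to ``every uncountable set'': this rests on the $\mathbf{ZF}$ fact that an infinite set injecting into $\omega$ is already countably infinite (its image is an infinite subset of $\omega$, hence equinumerous with $\omega$) and therefore Dedekind infinite, so that the only possible witnesses to the failure of \textbf{Fin} are uncountable sets, which hypothesis (iii) covers. Everything else is routine once the identification of $\tau^{\star}$ with the cofinite topology and the description of dense sets therein are in place.
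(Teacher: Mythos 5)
Your proof is correct and follows essentially the same route as the paper: identify $\tau^{\star}$ on a discrete space with the cofinite topology, observe that infinite subsets are exactly the dense ones, and derive (i)$\Rightarrow$(ii) from Dedekind infiniteness of uncountable subsets, with (ii)$\Rightarrow$(iii)$\Rightarrow$(i) as in the paper. You merely spell out the details (notably the countable-versus-uncountable case split in (iii)$\Rightarrow$(i)) that the paper dismisses as obvious.
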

\begin{proof} Let $(X, \tau)$ be a discrete space, i.e. $\tau=\mathcal{P}(X)$. If $X$ is countable, then, of course, $(X, \tau^{\star})$ is hereditarily separable. Consider the case when  $Y\subseteq X$ and $Y$ is uncountable. If \textbf{Fin} holds, then $Y$ is Dedekind infinite, so $Y$ contains an infinitely countable subset. It is clear that when $D$ is an infinitely countable subset of $Y$, then $D$ is dense in $(Y, \tau^{\star}\vert Y)$. Hence, (i) implies (ii). It is obvious that (ii) implies (iii) and that (iii) implies (i).
\end{proof}

\begin{corollary} It is consistent with $\mathbf{ZF}$ that there exists an uncountable discrete space $(X, \tau)$ such that $(X, \tau^{\star})$ is separable and not hereditarily separable.
\end{corollary}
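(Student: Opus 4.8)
The plan is to produce, in a suitable model of $\mathbf{ZF}$, an explicit discrete space witnessing both required properties, using the observation that underlies the preceding theorem. The key point is that for a discrete space $(X,\tau)$ with $\tau=\mathcal{P}(X)$ the $\tau$-compact sets are exactly the finite subsets of $X$ (an infinite subset of a discrete space is covered by its singletons with no finite subcover, and this needs no choice), so $\tau^{\star}$ is precisely the cofinite topology on $X$; moreover, for any $Y\subseteq X$ the subspace topology $\tau^{\star}\vert Y$ is the cofinite topology on $Y$. A routine $\mathbf{ZF}$-computation then shows that, for an infinite set $Z$, a subset $D$ is dense in $Z$ with the cofinite topology if and only if $D$ is infinite; hence the cofinite space on $Z$ is separable exactly when $Z$ possesses a countably infinite subset, i.e. exactly when $Z$ is Dedekind-infinite.

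Next I would pass to a model $\mathcal{M}$ of $\mathbf{ZF}$ in which \textbf{Fin} fails; such models exist, since Form 9 of \cite{HR} is not a theorem of $\mathbf{ZF}$ (for instance, the basic Fraenkel permutation model, or, after transfer, the basic Cohen model, contains an infinite Dedekind-finite set). Working inside $\mathcal{M}$, fix an infinite Dedekind-finite set $A$, take a set $\mathbb{N}$ disjoint from $A$, and let $X=A\cup\mathbb{N}$ with the discrete topology $\tau=\mathcal{P}(X)$.

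Then I would check the three assertions. The set $X$ is uncountable: $A$ is infinite yet has no countably infinite subset, so $A$ is uncountable, and therefore so is its superset $X$. The space $(X,\tau^{\star})$ is separable: $\mathbb{N}$ is a countably infinite, hence dense, subset of the cofinite space $(X,\tau^{\star})$. Finally, $(X,\tau^{\star})$ is not hereditarily separable: taking $Y=A$, the subspace $(A,\tau^{\star}\vert A)$ carries the cofinite topology on the infinite Dedekind-finite set $A$; every dense subset of it must be infinite, but every infinite subset of $A$ is again Dedekind-finite and thus not countable, so $(A,\tau^{\star}\vert A)$ has no countable dense subset.

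I do not expect a real obstacle here. The only substantive points are (a) identifying $\tau^{\star}$ on a discrete space with the cofinite topology and pinning down separability of cofinite spaces in $\mathbf{ZF}$, and (b) recognizing that one should glue a copy of $\mathbb{N}$ onto a Dedekind-finite set — rather than simply invoke the failure of clause (ii) of the preceding theorem — so as to force the whole space to be separable while retaining a non-separable subspace. Combining the construction with a known model of $\neg\textbf{Fin}$ then yields the stated consistency result.
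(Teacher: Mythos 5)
Your proof is correct and takes essentially the same route as the paper: in a model where \textbf{Fin} fails, one exhibits a discrete space whose underlying set is Dedekind-infinite (so that the cofinite space $(X,\tau^{\star})$ has a countably infinite, hence dense, subset) but contains an infinite Dedekind-finite subset, whose cofinite subspace topology admits no countable dense set. The only difference is the choice of witness: the paper takes $X=\mathbb{R}$ in Cohen's model $\mathcal{M}1$, where $\mathbb{R}$ has an uncountable Dedekind-finite subset and is itself Dedekind-infinite, while you glue a copy of $\mathbb{N}$ onto an arbitrary infinite Dedekind-finite set, which works under the marginally weaker hypothesis that \textbf{Fin} fails at all.
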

\begin{proof} Let $\mathcal{M}$ be any model of $\mathbf{ZF}$ in which $\mathbb{R}$ contains an uncountable Dedekind finite set. For instance, $\mathcal{M}$ can be Cohen's original model $\mathcal{M}$1 of \cite{HR}. Then if $\tau$ is the discrete topology on $\mathbb{R}$, the space $(\mathbb{R}, \tau^{\star})$ is not hereditarily separable because for each uncountable Dedekind finite subset $Y$ of $\mathbb{R}$, the space $(Y, \tau^{\star}\vert Y)$ is not separable. Of course, $(\mathbb{R}, \tau^{\star})$ is separable because $\mathbb{R}$ is Dedekind infinite.
\end{proof}

\begin{proposition}
For every Hausdorff space $(X, \tau)$, the space $(X, \tau^{\star})$ is $T_{1}$.
\end{proposition}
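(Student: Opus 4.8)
\emph{Proof proposal.} The plan is to verify the $T_1$ axiom in its standard equivalent form: every singleton of $X$ is closed in $(X,\tau^{\star})$. First I would record the elementary fact, provable in $\mathbf{ZF}$, that every finite subset of $X$ is $\tau$-compact; in particular, for each $x\in X$ the singleton $\{x\}$ is $\tau$-compact, since any open cover of $\{x\}$ already contains a one-element subcover, namely any member of the cover that contains $x$. Thus $\{x\}\in\mathcal{K}(\tau)$ for every $x\in X$.

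Next, by the very definition of $\tau^{\star}$, from $\{x\}\in\mathcal{K}(\tau)$ we get $X\setminus\{x\}\in\tau^{\star}$, i.e. $\{x\}$ is closed in $(X,\tau^{\star})$. Since this holds for every point $x\in X$, the space $(X,\tau^{\star})$ is $T_1$. Equivalently, one may phrase the argument through the cofinite topology: as all finite subsets of $X$ are $\tau$-compact, $\tau^{\star}$ contains the cofinite topology on $X$, and any topology on $X$ that refines the cofinite topology is $T_1$.

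There is essentially no obstacle here; the only point that deserves an explicit mention is that the one-element subcover is chosen without invoking any form of the axiom of choice, so the whole argument goes through in $\mathbf{ZF}$. (It is worth noting that Hausdorffness of $(X,\tau)$ is not actually needed for this particular conclusion beyond its role in guaranteeing that $\tau^{\star}$ is a topology.)
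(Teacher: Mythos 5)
Your proposal is correct and follows exactly the paper's argument: finite sets (in particular singletons) are $\tau$-compact, so $X\setminus\{x\}\in\tau^{\star}$ for every $x\in X$, whence $(X,\tau^{\star})$ is $T_1$. The additional remarks about the cofinite topology and the absence of any choice principle are accurate but not needed beyond what the paper already states.
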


\begin{proof}
Let $x \in X$.  Since finite sets are compact, we have that $X\setminus \{x\}$ is open in $(X, \tau^{\star})$. Hence, $(X, \tau^{\star})$ is a $T_{1}$-space.
\end{proof}

\begin{proposition}
$(X,\tau)$ is not compact if and only if $(X, \tau^{\star})$ is not Hausdorff. Moreover, if $(X,\tau)$ is not compact, then any two non-empty $\tau^{\star}$-open sets have a non-empty intersection.
\end{proposition}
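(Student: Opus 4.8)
The plan is to establish the stronger ``moreover'' clause first, since it immediately yields the forward implication of the equivalence, and then to obtain the converse from the corollary stating that $(X,\tau)$ is compact if and only if $\tau=\tau^{\star}$.

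First I would treat the case in which $(X,\tau)$ is not compact. Let $U_1,U_2\in\tau^{\star}$ be non-empty. By Definition \ref{star} there are $\tau$-compact sets $M_1,M_2$ with $U_i=X\setminus M_i$ for $i\in\{1,2\}$, so that $U_1\cap U_2=X\setminus(M_1\cup M_2)$. The key point is that a finite union of compact subspaces is compact, and this is provable in $\mathbf{ZF}$: given an open cover of $M_1\cup M_2$, one extracts a finite subcover of $M_1$ and a finite subcover of $M_2$ and takes their union, which involves only two instances of choosing a finite subcover. Hence $M_1\cup M_2\in\mathcal{K}(\tau)$, and since $(X,\tau)$ is not compact we must have $M_1\cup M_2\neq X$, so $U_1\cap U_2=X\setminus(M_1\cup M_2)\neq\emptyset$. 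This proves the ``moreover'' assertion. Now, since $X$ is infinite, it contains two distinct points $x$ and $y$; every $\tau^{\star}$-open neighbourhood of $x$ and every $\tau^{\star}$-open neighbourhood of $y$ is non-empty, hence these neighbourhoods meet, so $x$ and $y$ cannot be separated by disjoint $\tau^{\star}$-open sets. Therefore $(X,\tau^{\star})$ is not Hausdorff.

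For the converse, suppose $(X,\tau)$ is compact. By the corollary that $(X,\tau)$ is compact if and only if $\tau=\tau^{\star}$ we get $\tau=\tau^{\star}$, and since $(X,\tau)$ is Hausdorff by our standing assumption, $(X,\tau^{\star})$ is Hausdorff as well. Taking the contrapositive, if $(X,\tau^{\star})$ is not Hausdorff then $(X,\tau)$ is not compact, which completes the equivalence.

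There is essentially no serious obstacle in this argument; the only points requiring a moment's care are that the union $M_1\cup M_2$ is $\tau$-compact without invoking any form of the axiom of choice, and that the infinitude of $X$ is what guarantees the existence of two distinct points to witness the failure of the Hausdorff property.
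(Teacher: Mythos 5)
Your proposal is correct and follows essentially the same route as the paper: the ``moreover'' clause is obtained by writing $U_1\cap U_2=X\setminus(M_1\cup M_2)$ and using that the union of two $\tau$-compact sets is $\tau$-compact but cannot be all of $X$ when $(X,\tau)$ is not compact, and the converse direction is deduced from the corollary that $(X,\tau)$ is compact if and only if $\tau=\tau^{\star}$. Your extra remarks on the $\mathbf{ZF}$-validity of finite unions of compact sets and on using the infinitude of $X$ to produce two distinct points are sound but only make explicit what the paper leaves implicit.
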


\begin{proof}
Assume that $(X,\tau)$ is not compact. Let $U$ and $V$ be any two non-empty open sets in $(X, \tau^{\star})$. Then $X\setminus U$ and $X\setminus V$ are compact  in $(X,\tau)$, so $(X\setminus U)\cup (X\setminus V)$ is compact in $(X, \tau)$. Hence, $X\neq (X\setminus U)\cup (X\setminus V)= X\setminus (U\cap V)$. This implies that $U\cap V\neq\emptyset$; thus, $(X, \tau^{\star})$ is not Hausdorff. On the other hand, if we assume that $(X, \tau^{\star})$ is not Hausdorff, then, since $(X, \tau)$ is Hausdorff, we have $\tau\neq\tau^{\star}$, so $(X, \tau)$ is not compact by Corollary 2.3.
\end{proof}

\begin{corollary}
If $(X,\tau)$ is not compact, then the following conditions are satisfied:
\begin{enumerate}
\item[(i)]  every set $V\in\tau^{\star}$ is connected in $(X, \tau^{\star})$;
\item[(ii)]  $(X, \tau^{\star})$  is connected and locally connected.
\end{enumerate}
\end{corollary}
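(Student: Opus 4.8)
The plan is to derive everything directly from the preceding Proposition, which guarantees that, when $(X,\tau)$ is not compact, any two non-empty $\tau^{\star}$-open subsets of $X$ meet.

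For (i), I would fix $V\in\tau^{\star}$; the case $V=\emptyset$ being trivial, assume $V\neq\emptyset$. Suppose, towards a contradiction, that $(V,\tau^{\star}\vert Y)$ with $Y=V$ is disconnected, so that $V=A\cup B$ where $A,B$ are non-empty, disjoint, and open in the subspace $(V,\tau^{\star}\vert V)$. The key (and essentially only) observation is that, since $V$ itself belongs to $\tau^{\star}$, every subset of $V$ that is open in the subspace $(V,\tau^{\star}\vert V)$ is in fact a member of $\tau^{\star}$. Hence $A$ and $B$ are two non-empty members of $\tau^{\star}$ with $A\cap B=\emptyset$, contradicting the Proposition. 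Therefore $V$ is connected in $(X,\tau^{\star})$.

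For (ii), I would first note that $\emptyset$ is $\tau$-compact, so $X=X\setminus\emptyset\in\tau^{\star}$; applying (i) to $V=X$ gives that $(X,\tau^{\star})$ is connected. For local connectedness, I would invoke the standard characterization that a space is locally connected precisely when it has a base of open connected sets: by (i), the whole topology $\tau^{\star}$ is such a base, so $(X,\tau^{\star})$ is locally connected, which completes the proof.

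I do not anticipate a genuine obstacle here; the only point requiring a word of justification is that relatively open subsets of a $\tau^{\star}$-open set are themselves $\tau^{\star}$-open, and the rest is a direct application of the Proposition together with elementary facts about connectedness and local connectedness.
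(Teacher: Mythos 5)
Your proof is correct and follows essentially the same route as the paper: both parts are derived directly from Proposition 2.9, with (ii) obtained from (i) since $X\in\tau^{\star}$ and the $\tau^{\star}$-open connected sets form a base. Your explicit remark that relatively open subsets of a $\tau^{\star}$-open set are themselves $\tau^{\star}$-open is a welcome bit of care that the paper's shorter argument leaves implicit.
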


\begin{proof} Suppose that $(X, \tau)$ is not compact and that $\emptyset\neq V\in\tau^{\star}$. If $V$ were disconnected in $(X, \tau^{\star})$, there would exist a pair $U, W$ of non-empty disjoint members of $\tau^{\star}$ which would contradict Proposition 2.9. Hence, $V$ is connected in $(X, \tau^{\star})$. This is why (i) holds. Of course, (ii) follows from (i).
\end{proof}

\begin{remark} Some authors call a topological space \emph{hyperconnected} or \emph{irreducible} if all open sets of this space are connected. In the light of Corollary 2.10, if $(X, \tau)$ is not compact, the space $(X, \tau^{\star})$ is hyperconnected.
\end{remark}

\begin{theorem} If $(X, \tau)$ is locally compact and second countable, then $(X, \tau^{\star})$ is second countable.
\end{theorem}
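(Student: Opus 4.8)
The plan is to exhibit, with no use of any choice principle, a countable base for $\tau^{\star}$. Recall that the non-empty members of $\tau^{\star}$ are exactly the complements $X\setminus M$ with $M\in\mathcal{K}(\tau)$; hence a subfamily $\mathcal{B}^{\star}\subseteq\tau^{\star}$ is a base for $\tau^{\star}$ if and only if, for every $\tau$-compact $K$ and every $x\in X\setminus K$, there is some $X\setminus M\in\mathcal{B}^{\star}$ with $K\subseteq M$ and $x\notin M$. So it is enough to construct a countable family $\mathcal{C}$ of $\tau$-compact sets such that for all $K\in\mathcal{K}(\tau)$ and all $x\notin K$ there is $M\in\mathcal{C}$ with $K\subseteq M$ and $x\notin M$; then $\{\emptyset\}\cup\{X\setminus M:M\in\mathcal{C}\}$ is the desired base.

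Fix a countable base $\{B_n:n\in\omega\}$ for $\tau$ and set $N=\{n\in\omega:\operatorname{cl}_{\tau}B_n\text{ is }\tau\text{-compact}\}$, a canonically defined subset of $\omega$. The first step is to check that $\{B_n:n\in N\}$ is still a base for $\tau$: given $x$ in a $\tau$-open set $U$, local compactness yields a $\tau$-compact neighbourhood $C$ of $x$, so $x\in W\subseteq C$ for some $\tau$-open $W$; picking $n$ with $x\in B_n\subseteq W\cap U$ we get $\operatorname{cl}_{\tau}B_n\subseteq\operatorname{cl}_{\tau}W\subseteq C$ (here $C$ is $\tau$-closed because $(X,\tau)$ is Hausdorff and $C$ is $\tau$-compact), and a $\tau$-closed subset of a $\tau$-compact set is $\tau$-compact, so $n\in N$ and $x\in B_n\subseteq U$.

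Now let $\mathcal{C}$ be the family of all finite unions of members of $\{\operatorname{cl}_{\tau}B_n:n\in N\}$; equivalently, $\mathcal{C}$ is the image of $\bigcup_{k\in\omega}N^{k}$ under $(n_1,\dots,n_k)\mapsto\operatorname{cl}_{\tau}B_{n_1}\cup\dots\cup\operatorname{cl}_{\tau}B_{n_k}$, so $\mathcal{C}$ is countable in $\mathbf{ZF}$ and every element of $\mathcal{C}$ is $\tau$-compact, being a finite union of $\tau$-compact sets. To verify the covering property, fix $K\in\mathcal{K}(\tau)$ and $x\notin K$. For each $y\in K$ we have $y\neq x$, so Hausdorffness of $(X,\tau)$ gives a $\tau$-open $V\ni y$ with $x\notin\operatorname{cl}_{\tau}V$, and then, since $\{B_n:n\in N\}$ is a base, an $n\in N$ with $y\in B_n\subseteq V$; consequently $x\notin\operatorname{cl}_{\tau}B_n$. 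Thus $\{B_n:n\in N,\ x\notin\operatorname{cl}_{\tau}B_n\}$ is a $\tau$-open cover of $K$, so by $\tau$-compactness of $K$ finitely many of these sets, say $B_{n_1},\dots,B_{n_k}$, cover $K$; then $M=\operatorname{cl}_{\tau}B_{n_1}\cup\dots\cup\operatorname{cl}_{\tau}B_{n_k}$ lies in $\mathcal{C}$ and satisfies $K\subseteq M$ and $x\notin M$. This finishes the construction and hence the proof.

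The substantive care needed — and the nearest thing to an obstacle — is the $\mathbf{ZF}$ bookkeeping: one must ensure that each auxiliary object is genuinely canonical (the index set $N$, the open cover $\{B_n:n\in N,\ x\notin\operatorname{cl}_{\tau}B_n\}$ of $K$, and the family $\mathcal{C}$ presented as a surjective image of $\bigcup_{k}N^{k}$), so that no fragment of choice is smuggled in, and one must invoke the $\mathbf{ZF}$ facts that a countable union of finite powers of a countable set is countable and that a surjective image of a countable set is countable. The remaining ingredients — compact subsets of Hausdorff spaces are closed, closed subsets of compact sets are compact, finite unions of compact sets are compact — are all available in $\mathbf{ZF}$.
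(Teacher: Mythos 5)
Your proof is correct and follows essentially the same route as the paper: restrict the countable base to the basic sets with $\tau$-compact closures (a base by local compactness), take finite unions of their closures as a countable cofinal family of compact sets, and complement to get a countable base for $\tau^{\star}$, with the same Hausdorff-plus-compactness argument for the covering step. Your extra $\mathbf{ZF}$ bookkeeping is sound but adds nothing beyond the paper's construction, which is already choice-free.
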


\begin{proof} Assume that $\mathcal{B}$ is a countable open base of a locally compact Hausdorff space $(X, \tau)$. Let $\mathcal{A}$ be the collection of all sets $U\in\mathcal{B}$ which have compact closures $\text{cl}_{\tau}U$ in $(X, \tau)$. By the local compactness of $(X, \tau)$, the collection $\mathcal{A}$ is an open base of $(X, \tau)$. Let $[\mathcal{A}]^{<\omega}$ be the collection of all finite subcollections of $\mathcal{A}$.  We put
$$\mathcal{B}^{\star}=\{ X\setminus\text{cl}_{\tau}(\bigcup\mathcal{C}): \mathcal{C}\in [\mathcal{A}]^{<\omega}\}.$$
Then $\mathcal{B}^{\star}$ is a countable subcollection of $\tau^{\star}$. To check that $\mathcal{B}^{\star}$ is an open base of $(X, \tau^{\star})$, let us consider any non-empty set $V\in\tau^{\star}$ and a point $x\in V$. Let $K=X\setminus V$ and let $\mathcal{U}$ be the collection of all $U\in\mathcal{A}$ such that $x\notin\text{cl}_{\tau} U$. Since $(X, \tau)$ is Hausdorff, we have $K\subseteq\bigcup\mathcal{U}$. By the compactness of $K$, there exists a finite $\mathcal{U}_K\subseteq\mathcal{U}$ such that $K\subseteq\bigcup\mathcal{U}_K$. Let $W=X\setminus\text{cl}_{\tau}(\bigcup\mathcal{U}_K)$. Then $W\in\mathcal{B}^{\star}, x\in W$ and $W\subseteq V$.
\end{proof}

The axiom of countable choice, denoted by $\mathbf{CC}$ in \cite{Her}, states that every non-empty countable collection of non-empty sets has a choice function (see Form 8 in \cite{HR}). The axiom of countable choice for $\mathbb{R}$, denoted by $\mathbf{CC}(\mathbb{R})$ in \cite{Her}, states that every non-empty countable collection of non-empty subsets of $\mathbb{R}$ has a choice function (see Form 94 in \cite{HR}).

\begin{remark} In view of Exercise E3 to Section 4.6 of \cite{Her},  $\mathbf{CC}(\mathbb{R})$ is equivalent to the statement: for every second countable topological space $Z$,  every open base of $Z$ contains a countable open base of $Z$. Let  us notice that, if $\mathcal{M}$ is a model of $\mathbf{ZF}$ in which there exists a dense infinite Dedekind finite subset $D$ of $\mathbb{R}$, then it holds true in $\mathcal{M}$ that the collection $\mathcal{B}^{\star}$ of all sets of the form $\mathbb{R}\setminus\bigcup_{i\in n+1}[a_i, b_i]$ with $n\in\omega, a_i,b_i\in D$ and $a_i<b_i$ for each $i\in n+1$ is an open base of $(\mathbb{R}, \tau_{nat}^{\star})$ which does not contain a countable open base of $(\mathbb{R}, \tau_{nat}^{\star})$.
\end{remark}

We recall that a topological space $(Z, \mathcal{T})$ is \emph{submetrizable} if there exists a metrizable topology $\mathcal{T}'$ on $Z$ such that $\mathcal{T}'\subseteq\mathcal{T}$.

\begin{theorem} The following conditions are equivalent:
\begin{enumerate}
\item[(i)] $(X, \tau^{\star})$ is metrizable;
\item[(ii)] $(X, \tau^{\star})$ is submetrizable;
\item[(iii)] $(X, \tau)$ is a compact metrizable space.
\end{enumerate}
\end{theorem}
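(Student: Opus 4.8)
The plan is to prove the three equivalences by establishing the cycle (iii)$\Rightarrow$(i)$\Rightarrow$(ii)$\Rightarrow$(iii), in which the first two implications are essentially immediate and the last one carries all the content.

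For (iii)$\Rightarrow$(i): if $(X,\tau)$ is compact and metrizable, then Corollary 2.3 gives $\tau=\tau^{\star}$, so $(X,\tau^{\star})$ is metrizable. For (i)$\Rightarrow$(ii): every metrizable space is trivially submetrizable, witnessed by its own topology.

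The substance is (ii)$\Rightarrow$(iii), which I would prove in two stages. First I would show that submetrizability of $(X,\tau^{\star})$ forces $(X,\tau)$ to be compact: if $\mathcal{T}'$ is a metrizable topology with $\mathcal{T}'\subseteq\tau^{\star}$, then $\mathcal{T}'$ is Hausdorff in $\mathbf{ZF}$ (metric balls separate points, with no appeal to choice), and since $\mathcal{T}'\subseteq\tau^{\star}$, the space $(X,\tau^{\star})$ is Hausdorff as well; by Proposition 2.9 this is only possible when $(X,\tau)$ is compact. Second, once $(X,\tau)$ is known to be compact, Corollary 2.3 yields $\tau=\tau^{\star}$, so the hypothesis becomes: the compact Hausdorff topology $\tau$ admits a coarser metrizable topology $\mathcal{T}'\subseteq\tau$. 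Then the identity map $\mathrm{id}\colon(X,\tau)\to(X,\mathcal{T}')$ is a continuous bijection from a compact space onto a Hausdorff space, hence a homeomorphism: for a $\tau$-closed set $F$, the set $F$ is $\tau$-compact (closed subsets of compact spaces are compact in $\mathbf{ZF}$), so $\mathrm{id}(F)$ is $\mathcal{T}'$-compact and therefore $\mathcal{T}'$-closed, using again the $\mathbf{ZF}$ fact, already invoked in the excerpt, that compact subspaces of Hausdorff spaces are closed. Hence $\tau\subseteq\mathcal{T}'$, so $\tau=\mathcal{T}'$ is metrizable, and $(X,\tau)$ is a compact metrizable space, which is (iii).

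I expect the only delicate point to be verifying that every ingredient is available in $\mathbf{ZF}$ with no form of choice: that metrizable implies Hausdorff, that closed subsets of compact spaces are compact, that continuous images of compact sets are compact, and that compact subspaces of Hausdorff spaces are closed. All four are choice-free, and the last is the one explicitly cited earlier in the paper, so the argument goes through in $\mathbf{ZF}$ as required.
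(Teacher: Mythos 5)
Your proposal is correct and follows essentially the same route as the paper: both reduce (ii)$\Rightarrow$(iii) to Proposition 2.9 (submetrizable forces $(X,\tau^{\star})$ Hausdorff, hence $(X,\tau)$ compact) and Corollary 2.3 ($\tau=\tau^{\star}$), and handle (iii)$\Rightarrow$(i) and (i)$\Rightarrow$(ii) trivially. The only difference is that where the paper simply cites the fact that every compact submetrizable space is metrizable, you prove it via the standard closed-map argument for the identity from a compact space onto a coarser Hausdorff topology, which is a choice-free filling-in of the cited fact rather than a different approach.
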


\begin{proof} Of course, (i) implies (ii). Assume (ii). If $(X, \tau)$ is not compact, then $(X, \tau^{\star})$ is not Hausdorff by Proposition 2.9. Hence, (ii) implies that $(X, \tau)$ is compact. In this case, $\tau=\tau^{\star}$ by Corollary 2.3. In consequence, $(X, \tau)$ is both compact and submetrizable. Since every compact submetrizable space is metrizable, (ii) implies (iii). That (iii) implies (i) follows from Corollary 2.3.
\end{proof}

\begin{proposition} Let $x_0\in X$. Then $\{x_0\}$ is of type $G_{\delta}$ in $(X, \tau^{\star})$ if and only if $X\setminus\{x_0\}$ is a $\sigma$-compact subspace of $(X, \tau)$.
\end{proposition}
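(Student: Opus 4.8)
The plan is to translate each side of the equivalence into a statement about a countable family of $\tau$-compact sets and then observe that the two translations coincide.

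First I would treat the implication from left to right. Assuming $\{x_0\}=\bigcap_{n\in\omega}V_n$ with $V_n\in\tau^{\star}$, I note that none of the $V_n$ is empty (their intersection is not), so $V_n=X\setminus M_n$ for some $M_n\in\mathcal{K}(\tau)$; passing to complements gives $X\setminus\{x_0\}=\bigcup_{n\in\omega}M_n$. Since $x_0\in V_n$, we get $x_0\notin M_n$, so each $M_n$ is a $\tau$-compact subset of $X\setminus\{x_0\}$. Using the $\mathbf{ZF}$-fact that a subset of a subspace is compact in the subspace exactly when it is compact in the whole space (open covers in a subspace being traces of open covers upstairs), each $M_n$ is compact in $(X\setminus\{x_0\},\tau\vert(X\setminus\{x_0\}))$, which makes $X\setminus\{x_0\}$ a $\sigma$-compact subspace of $(X,\tau)$.

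For the reverse implication I would run the same computation backwards: writing $X\setminus\{x_0\}=\bigcup_{n\in\omega}K_n$ with each $K_n$ compact in the subspace $X\setminus\{x_0\}$, the same subspace remark gives $K_n\in\mathcal{K}(\tau)$, hence $X\setminus K_n\in\tau^{\star}$, and $\bigcap_{n\in\omega}(X\setminus K_n)=X\setminus\bigcup_{n\in\omega}K_n=\{x_0\}$ exhibits $\{x_0\}$ as a $G_{\delta}$ set in $(X,\tau^{\star})$.

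I do not expect a genuine obstacle here, and the proof uses no form of choice. The only places that need a moment of care are: observing that a $G_{\delta}$-representation of a nonempty set may be taken to consist of nonempty — hence compact-complement — open sets; noting that $x_0$ avoids each complementary compact set; and invoking the absoluteness of compactness between a space and its subspaces, which is a theorem of $\mathbf{ZF}$. The case where $(X,\tau)$ is itself compact requires no separate treatment, since then $\tau=\tau^{\star}$ by Corollary 2.3 and the displayed computations still apply verbatim.
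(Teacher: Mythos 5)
Your proof is correct and follows essentially the same complementation argument as the paper: pass from $\{x_0\}=\bigcap_n V_n$ with $V_n\in\tau^{\star}$ to $X\setminus\{x_0\}=\bigcup_n(X\setminus V_n)$ and back. The extra care you take (nonemptiness of the $V_n$, $x_0\notin M_n$, and the $\mathbf{ZF}$-absoluteness of compactness between subspace and ambient space) only makes explicit details the paper leaves implicit.
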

\begin{proof} \emph{Necessity.} Suppose that $\{U_n:n\in\omega\}\subseteq \tau^{\star}$ and $\{x_0\}=\bigcap_{n\in\omega} U_n$. Then the sets $K_n= X\setminus U_n$ are all compact in $(X, \tau)$ and $X\setminus \{x_0\}=\bigcup_{n\in\omega} K_n$, so $X\setminus\{x_0\}$ is $\sigma$-compact in $(X, \tau)$.

\emph{Sufficiency.} Suppose that $X\setminus\{x_0\}=\bigcup_{n\in\omega} C_n$ where all the sets $C_n$ are compact in $(X, \tau)$. Then the sets $V_n= X\setminus C_n$ are all open in $(X, \tau^{\star})$ and $\{x_0\}=\bigcap_{n\in\omega} V_n$.
\end{proof}

\begin{corollary} If $(X, \tau)$ is not $\sigma$-compact, then the following conditions are satisfied:
\begin{enumerate}
\item[(i)] there does not exist a one-point set of type $G_{\delta}$ in $(X, \tau^{\star})$;
\item[(ii)] $(X, \tau^{\star})$ is not first countable;
\item[(iii)] $(X, \tau^{\star})$ is not second countable;
\item[(iv)] $(X, \tau^{\star})$ is not quasi-metrizable.
\end{enumerate}
\end{corollary}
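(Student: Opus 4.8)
The plan is to reduce everything to Proposition~2.17 (the $G_{\delta}$ characterization) together with Proposition~2.8 (the space $(X,\tau^{\star})$ is $T_{1}$). Once (i) is in hand, statements (ii)--(iv) follow formally, since each of ``first countable'', ``second countable'', ``quasi-metrizable'' forces one-point sets to be of type $G_{\delta}$ in a $T_{1}$ space.

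First I would prove (i). Fix $x_{0}\in X$ and suppose, towards a contradiction, that $\{x_{0}\}$ is of type $G_{\delta}$ in $(X,\tau^{\star})$. By Proposition~2.17, $X\setminus\{x_{0}\}$ is a $\sigma$-compact subspace of $(X,\tau)$, say $X\setminus\{x_{0}\}=\bigcup_{n\in\omega}C_{n}$ with each $C_{n}$ being $\tau$-compact. Since $\{x_{0}\}$ is $\tau$-compact, $X=\{x_{0}\}\cup\bigcup_{n\in\omega}C_{n}$ is a countable union of $\tau$-compact sets, so $(X,\tau)$ is $\sigma$-compact, contrary to the hypothesis. Hence no one-point subset of $(X,\tau^{\star})$ is of type $G_{\delta}$, which is (i).

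Next I would record the auxiliary fact that in any $T_{1}$ first countable space every singleton is of type $G_{\delta}$: if $\{U_{n}:n\in\omega\}$ is a neighbourhood base at a point $x$ and $y\neq x$, then $X\setminus\{y\}$ is open and contains $x$, so $U_{n}\subseteq X\setminus\{y\}$ for some $n$, and therefore $\{x\}=\bigcap_{n\in\omega}U_{n}$. By Proposition~2.8, $(X,\tau^{\star})$ is $T_{1}$; were it first countable, its one-point sets would be $G_{\delta}$, contradicting (i). This proves (ii). For (iii) and (iv) I would invoke two standard $\mathbf{ZF}$-implications: a second countable space is first countable (if $\mathcal{B}$ is a countable base, then $\{B\in\mathcal{B}:x\in B\}$ is a countable neighbourhood base at $x$), and a quasi-metrizable space is first countable (the balls of radius $1/n$, $n\in\omega\setminus\{0\}$, around a point form a countable neighbourhood base there, using the Archimedean property of $\mathbb{R}$). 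Since $(X,\tau^{\star})$ is not first countable, it is neither second countable nor quasi-metrizable.

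There is no real obstacle here beyond Proposition~2.17; the only point needing slight care is choice-freeness. Each implication used above --- ``first countable $\Rightarrow$ singletons $G_{\delta}$'', ``second countable $\Rightarrow$ first countable'', ``quasi-metrizable $\Rightarrow$ first countable'' --- only requires selecting one basic neighbourhood inside a prescribed open set (a single, explicitly describable choice), rather than a simultaneous choice over an infinite family, so the whole argument goes through in $\mathbf{ZF}$.
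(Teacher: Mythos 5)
Your proof is correct and follows the route the paper intends: the corollary is stated without proof precisely because (i) is immediate from the $G_{\delta}$ characterization (Proposition 2.15, not 2.17 as you cite it) after adjoining the compact singleton, and (ii)--(iv) then follow from the $T_1$ property of $(X,\tau^{\star})$ (Proposition 2.8) via the standard, choice-free implications you spell out. No gaps.
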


\begin{remark} We denote by $\mathbb{S}$ the Sorgenfrey line, i.e the topological space $(\mathbb{R}, \tau_{\mathbb{S}})$ where $\tau_{\mathbb{S}}$ is the topology on $\mathbb{R}$ which has as an open base the collection of all half-open intervals $[a, b)$ where $a, b\in\mathbb{R}$ and $a<b$.  The Sorgenfrey line is one of the most frequently used examples of a submetrizable, quasi-metrizable but not metrizable space, so we shall pay a special attention to it.

The countable union theorem (Form 31 in \cite{HR}, abbreviated to $\mathbf{CUT}$ in \cite{Her}) states that countable unions of countable sets are countable sets. Let $\mathbf{CUT}(\mathbb{R})$ be the statement: for every family $\{A_n: n\in\omega\}$ of countable subsets of $\mathbb{R}$, the union $\bigcup_{n\in\omega}A_n$ is countable (see Form 6 in \cite{HR})). It is easy to prove in $\mathbf{[ZF+CUT(\mathbb{R})]}$ that the Sorgenfrey line is not $\sigma$-compact by using the following simple argument: since all compact subsets of $\mathbb{S}$ are countable, if $\mathbb{S}$ were $\sigma$-compact, $\mathbb{R}$ would be a countable union of countable sets; however, $\mathbb{R}$  cannot be a countable union of countable sets because $\mathbb{R}$ is uncountable. This is not a proof in $\mathbf{ZF}$ that the Sorgenfrey line is not $\sigma$-compact because $\mathbf{CUT}(\mathbb{R})$ fails in some models of $\mathbf{ZF}$ (see Theorem 10.6 of \cite{J}).
\end{remark}

\begin{proposition} In every model of $\mathbf{ZF}$,  the Sorgenfrey line is not $\sigma$-compact.
\end{proposition}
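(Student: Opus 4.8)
The plan is to show that every subset of $\mathbb{R}$ that is compact in the Sorgenfrey line $\mathbb{S}$ is closed and nowhere dense in the real line $\mathbb{R}$, and then to refute $\sigma$-compactness of $\mathbb{S}$ by a Baire-category diagonalization carried out explicitly, so that no choice principle is used. In contrast to the $\mathbf{CUT}(\mathbb{R})$-argument sketched in the preceding remark, this route does \emph{not} rely on the assertion that all compact subsets of $\mathbb{S}$ are countable.

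First I would record two elementary facts. Since $\tau_{nat}\subseteq\tau_{\mathbb{S}}$, every $\tau_{\mathbb{S}}$-compact set is $\tau_{nat}$-compact, hence $\tau_{nat}$-closed, because in $\mathbf{ZF}$ a compact subspace of a Hausdorff space is closed. Also, for reals $a<b$ the set $[a,b)$ is $\tau_{\mathbb{S}}$-closed (its complement $(-\infty,a)\cup[b,\infty)$ is a union of members of the base of $\tau_{\mathbb{S}}$), while it is not $\tau_{\mathbb{S}}$-compact, since $\{[a,c):a<c<b\}$ is a $\tau_{\mathbb{S}}$-open cover of $[a,b)$ admitting no finite subcover. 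Hence, if $K$ were compact in $\mathbb{S}$ and had nonempty interior in $\mathbb{R}$, then $K$ would contain some $[a,b)$ with $a<b$, and $[a,b)$ would be a $\tau_{\mathbb{S}}$-closed subset of the $\tau_{\mathbb{S}}$-compact set $K$, hence $\tau_{\mathbb{S}}$-compact --- a contradiction. Therefore every set compact in $\mathbb{S}$ is closed and has empty interior in $\mathbb{R}$, i.e.\ is closed and nowhere dense in $\mathbb{R}$.

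Next I would argue by contradiction. Suppose $\mathbb{R}=\bigcup_{n\in\omega}K_n$ with each $K_n$ compact in $\mathbb{S}$; then each $K_n$ is closed and nowhere dense in $\mathbb{R}$. Fix once and for all an enumeration $\mathbb{Q}\times\mathbb{Q}=\{(p_k,q_k):k\in\omega\}$ and define, by recursion on $n$, a decreasing sequence of nondegenerate closed intervals $I_n=[a_n,b_n]$: let $I_0=[0,1]$; given $I_n$, the set $(a_n,b_n)\setminus K_n$ is open (as $K_n$ is closed) and nonempty (as $K_n$ is nowhere dense), so it contains $[p_k,q_k]$ for some $k$ with $p_k<q_k$; letting $k(n)$ be the least such $k$, put $I_{n+1}=[p_{k(n)},q_{k(n)}]$, a nondegenerate closed subinterval of $(a_n,b_n)$ disjoint from $K_n$. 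Since the recursive rule is explicitly definable, the sequence $(I_n)_{n\in\omega}$ exists in $\mathbf{ZF}$ with no appeal to choice; then $a:=\sup_{n\in\omega}a_n$ exists and lies in every $I_m$, so from $a\in I_{n+1}$ and $I_{n+1}\cap K_n=\emptyset$ we get $a\notin K_n$ for all $n\in\omega$, whence $a\notin\bigcup_{n\in\omega}K_n=\mathbb{R}$, which is absurd.

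I expect the delicate point to be precisely this last step: the Baire category theorem for complete metric spaces is not provable in $\mathbf{ZF}$, so the diagonalization must be organized as a genuine primitive recursion on $\omega$ rather than as a succession of unrelated selections. Fixing the enumeration of $\mathbb{Q}\times\mathbb{Q}$ beforehand and always taking the least admissible rational interval achieves exactly this; the remaining ingredients --- existence of suprema of bounded sets of reals, and nonemptiness of the intersection of a decreasing sequence of closed bounded intervals of $\mathbb{R}$ --- are all available in $\mathbf{ZF}$.
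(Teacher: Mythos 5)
Your proof is correct, and its skeleton is the same as the paper's: show that every Sorgenfrey-compact set is closed and nowhere dense in $(\mathbb{R},\tau_{nat})$, then defeat $\sigma$-compactness by a Baire-category argument valid in $\mathbf{ZF}$. The differences are local but worth noting. For the nowhere-density lemma, the paper argues that compact subsets of $\mathbb{S}$ are countable, closed in $\mathbb{R}$ and have no left accumulation points; you instead observe that $[a,b)$ is $\tau_{\mathbb{S}}$-closed but not $\tau_{\mathbb{S}}$-compact, so a Sorgenfrey-compact set with nonempty euclidean interior would contain a closed non-compact subset --- a shorter route that indeed avoids any countability claim. For the Baire step, the paper simply cites Theorem 4.102 of Herrlich (the Baire category theorem holds in $\mathbf{ZF}$ for separable completely metrizable spaces), whereas you re-prove exactly that instance for $\mathbb{R}$ by a definable recursion over a fixed enumeration of rational intervals, taking least admissible indices so that no choice is invoked; your caution is warranted for general complete metric spaces (where BCT is not a theorem of $\mathbf{ZF}$), but in the separable case the citation would have sufficed. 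Net effect: your argument is self-contained and slightly more elementary, while the paper's is shorter by outsourcing the Baire step and records the extra structural information (countability, no left accumulation points) about Sorgenfrey-compact sets.
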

\begin{proof} Consider any countable collection $\{K_n: n\in\omega\}$ of compact sets of the Sorgenfrey line. Then all  the sets $K_n$ are countable, closed in $\mathbb{R}$ and they do not have left accumulation points in $\mathbb{R}$. Therefore, each $K_n$ is nowhere dense in $\mathbb{R}$. Since $\mathbb{R}$ is a separable completely metrizable space, by Theorem 4.102 of \cite{Her}, the interior in $\mathbb{R}$ of the set $\bigcup_{n\in\omega} K_n$ is empty. Hence, $\mathbb{R}\neq\bigcup_{n\in\omega} K_n$.
\end{proof}

\begin{corollary} The compact complement topology of the Sorgenfrey line is not first countable.
\end{corollary}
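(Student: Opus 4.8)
\emph{Proof plan.} The statement will be obtained at once from the two results that immediately precede it. By the previous proposition, the Sorgenfrey line $\mathbb{S}=(\mathbb{R},\tau_{\mathbb{S}})$ is not $\sigma$-compact in $\mathbf{ZF}$. Since $\tau_{\mathbb{S}}$ is finer than $\tau_{nat}$, the space $\mathbb{S}$ is Hausdorff, so the whole machinery developed above for a Hausdorff space $(X,\tau)$ applies with $X=\mathbb{R}$ and $\tau=\tau_{\mathbb{S}}$. In particular, clause (ii) of the corollary to Proposition 2.15 says that if a Hausdorff space fails to be $\sigma$-compact then its compact complement topology fails to be first countable; applied to $\mathbb{S}$ this gives that $\tau_{\mathbb{S}}^{\star}$ is not first countable.

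If one prefers to spell out the argument instead of invoking that clause, the chain is: were $(\mathbb{R},\tau_{\mathbb{S}}^{\star})$ first countable, then every singleton $\{x_0\}$ would be a $G_{\delta}$-set in it, hence, by the proposition characterizing such singletons (Proposition 2.15), the set $\mathbb{R}\setminus\{x_0\}$ would be $\sigma$-compact in $\mathbb{S}$; throwing in the compact set $\{x_0\}$ would then exhibit $\mathbb{R}$ itself as $\sigma$-compact in $\mathbb{S}$, contradicting the preceding proposition. Either way the conclusion follows.

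I expect no genuine obstacle at this point: all the substance sits in the preceding proposition, whose proof exploits that every compact subset of $\mathbb{S}$ is countable and nowhere dense in $\mathbb{R}$, together with the Baire category theorem for the separable completely metrizable space $\mathbb{R}$, which is available in $\mathbf{ZF}$. What remains for the corollary is only the bookkeeping check that $\mathbb{S}$ satisfies the Hausdorff hypothesis under which Proposition 2.15 and its corollary were proved, and this is immediate.
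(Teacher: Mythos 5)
Your proposal is correct and follows exactly the route the paper intends: Corollary 2.16(ii) (equivalently, Proposition 2.15 plus the observation that adjoining the compact singleton $\{x_0\}$ would make $\mathbb{R}$ $\sigma$-compact in $\mathbb{S}$) combined with the preceding proposition that the Sorgenfrey line is not $\sigma$-compact in $\mathbf{ZF}$. The paper treats this as an immediate consequence and offers no separate proof, so your spelled-out version, including the check that $\mathbb{S}$ is Hausdorff, is just the same argument made explicit.
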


\begin{corollary} The compact complement topology of the Sorgenfrey line is not quasi-metrizable.
\end{corollary}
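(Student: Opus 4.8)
The plan is to deduce this corollary directly from the two preceding results, so that strictly speaking nothing new needs to be proved. By Proposition 2.19, the Sorgenfrey line $\mathbb{S}=(\mathbb{R},\tau_{\mathbb{S}})$ is not $\sigma$-compact in any model of $\mathbf{ZF}$. Corollary 2.17(iv) applies verbatim to $(X,\tau)=\mathbb{S}$ and gives that $(\mathbb{R},\tau_{\mathbb{S}}^{\star})$ is not quasi-metrizable. All of the mathematical content has thus already been isolated, on the one hand in the Baire-category argument of Proposition 2.19, and on the other in the general implication of Corollary 2.17, so the proof is a one-line citation of these two facts.

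If a more self-contained argument is wanted, I would spell out the following route. First recall that every quasi-metrizable space is first countable: if $d$ is a quasi-metric inducing the topology, then for each point $x$ the family $\{B_d(x,1/n):n\in\omega\setminus\{0\}\}$ of forward balls is a countable neighbourhood base at $x$, and this construction uses no form of choice. Hence it suffices to show that $(\mathbb{R},\tau_{\mathbb{S}}^{\star})$ is not first countable, which is Corollary 2.20 and which in turn follows from Corollary 2.17(ii) together with Proposition 2.19. Equivalently, one can argue through Proposition 2.16: since $\mathbb{S}$ is not $\sigma$-compact, neither is $\mathbb{S}\setminus\{x_{0}\}$ for any $x_{0}\in\mathbb{R}$ (adjoining a single point to a space cannot make a non-$\sigma$-compact space $\sigma$-compact), so by Proposition 2.16 no singleton is of type $G_{\delta}$ in $(\mathbb{R},\tau_{\mathbb{S}}^{\star})$; but in any quasi-metrizable space every singleton is a $G_{\delta}$, by the forward-ball description above.

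There is no genuine obstacle here. The only point deserving a word of care is the $\mathbf{ZF}$-validity of the implication ``quasi-metrizable $\Rightarrow$ first countable (equivalently, $\Rightarrow$ all singletons are $G_{\delta}$)'': the forward balls of a fixed quasi-metric furnish an explicitly defined countable base at each point, so this implication holds in $\mathbf{ZF}$ without any choice. Consequently the corollary holds in every model of $\mathbf{ZF}$, in accordance with the scope of Proposition 2.19.
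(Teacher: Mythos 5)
Your proposal is correct and follows exactly the paper's intended route: the corollary is stated without proof precisely because it is the combination of Proposition 2.19 (the Sorgenfrey line is not $\sigma$-compact in $\mathbf{ZF}$) with Corollary 2.17(iv). Your supplementary unwinding via Proposition 2.16 and the forward-ball argument for ``quasi-metrizable $\Rightarrow$ singletons are $G_{\delta}$'' is accurate and choice-free, but it adds nothing beyond what the cited results already encapsulate.
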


\begin{proposition} The compact complement topology of the real line $\mathbb{R}$ is quasi-metrizable.
\end{proposition}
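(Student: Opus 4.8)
The plan is to exhibit an explicit quasi-metric on $\mathbb{R}$ inducing $\tau_{nat}^{\star}$; note that this is not ruled out by Corollary 2.16, since $\mathbb{R}$ is $\sigma$-compact, in contrast with the Sorgenfrey line. The first step is to reformulate $\tau_{nat}^{\star}$ geometrically. Identify $\mathbb{R}$, via stereographic projection, with $S^{1}\setminus\{p\}$, where $S^{1}\subseteq\mathbb{R}^{2}$ is the unit circle equipped with the metric $d$ induced by the Euclidean metric of $\mathbb{R}^{2}$, and $p\in S^{1}$ is the point corresponding to ``infinity''. I would first check that a set $M\subseteq\mathbb{R}$ is $\tau_{nat}$-compact if and only if $M$ is closed in $S^{1}$ and $p\notin M$: one direction uses that a compact subspace of the Hausdorff space $S^{1}$ is closed in $S^{1}$ (Theorem 3.1.8 of \cite{E}, valid in $\mathbf{ZF}$), the other that a closed subset of the compact space $S^{1}$ is compact, where $S^{1}$ is compact already in $\mathbf{ZF}$ (being a closed bounded subset of the plane, so that no form of choice intervenes). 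It follows that the non-empty members of $\tau_{nat}^{\star}$ are precisely the sets $W\setminus\{p\}$ with $W$ an open subset of $S^{1}$ containing $p$.

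With this description available, I would put, for $x,y\in\mathbb{R}=S^{1}\setminus\{p\}$,
\[
\rho(x,y)=\min\{\,d(x,y),\;d(p,y)\,\},
\]
and verify that $\rho$ is a quasi-metric. Nonnegativity and $\rho(x,x)=0$ are immediate; if $\rho(x,y)=0$, then $d(x,y)=0$, because $d(p,y)>0$ for $y\neq p$, hence $x=y$. The triangle inequality $\rho(x,z)\le\rho(x,y)+\rho(y,z)$ I would establish by a short case analysis: if $d(p,z)\le d(y,z)$, then $\rho(x,z)\le d(p,z)=\rho(y,z)\le\rho(x,y)+\rho(y,z)$; otherwise $\rho(y,z)=d(y,z)$, and then either $\rho(x,y)=d(x,y)$, so that $\rho(x,y)+\rho(y,z)\ge d(x,y)+d(y,z)\ge d(x,z)\ge\rho(x,z)$, or $\rho(x,y)=d(p,y)$, so that $\rho(x,y)+\rho(y,z)\ge d(p,y)+d(y,z)\ge d(p,z)\ge\rho(x,z)$. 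The quasi-metric $\rho$ is plainly not symmetric (for instance $\rho(0,y)$ can be made arbitrarily small by taking $|y|$ large, whereas $\rho(y,0)$ stays bounded away from $0$ for large $|y|$), necessarily so, since $(\mathbb{R},\tau_{nat}^{\star})$ is not metrizable --- indeed not even Hausdorff, by Proposition 2.9, as $\mathbb{R}$ is not compact.

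It then remains to match the topology induced by $\rho$ with $\tau_{nat}^{\star}$. The crucial observation is that, writing $B_{d}$ for balls in $S^{1}$,
\[
B_{\rho}(x,\varepsilon)=\bigl(B_{d}(x,\varepsilon)\cup B_{d}(p,\varepsilon)\bigr)\setminus\{p\},
\]
so that $B_{\rho}(x,\varepsilon)\cup\{p\}=B_{d}(x,\varepsilon)\cup B_{d}(p,\varepsilon)$ is open in $S^{1}$ and contains $p$; by the reformulation above, $B_{\rho}(x,\varepsilon)\in\tau_{nat}^{\star}$. Conversely, given a non-empty $V\in\tau_{nat}^{\star}$ and $x\in V$, the set $C=S^{1}\setminus(V\cup\{p\})$ is closed in $S^{1}$ and contains neither $x$ nor $p$, hence $d(x,C)>0$ and $d(p,C)>0$; choosing $\varepsilon$ below both quantities gives $B_{d}(x,\varepsilon)\cup B_{d}(p,\varepsilon)\subseteq V\cup\{p\}$ and therefore $B_{\rho}(x,\varepsilon)\subseteq V$ (and $x\in B_{\rho}(x,\varepsilon)$ since $\rho(x,x)=0$). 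Thus the $\rho$-balls form a base for $\tau_{nat}^{\star}$, so $(\mathbb{R},\tau_{nat}^{\star})$ is quasi-metrizable.

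I do not expect a genuine obstacle here. The only mildly delicate points are ensuring that the passage to $S^{1}$ stays within $\mathbf{ZF}$ (which it does, since stereographic projection, the Euclidean metric, and the compactness of $S^{1}$ are all available without any choice principle) and carrying out the elementary bookkeeping in the case analysis for the triangle inequality and in the computation of the $\rho$-balls.
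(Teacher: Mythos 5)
Your proof is correct, and it takes a genuinely different route from the paper. The paper does not construct a quasi-metric at all: it defines explicit neighbourhoods $G(n,x)=(x-\frac{1}{2^{n+1}},x+\frac{1}{2^{n+1}})\cup(-\infty,-m(x)-n-2)\cup(m(x)+n+2,+\infty)$, checks the nesting condition ``$y\in G(n+1,x)\Rightarrow G(n+1,y)\subseteq G(n,x)$'', and then invokes Theorem 10.2 of Gruenhage's chapter \cite{Gr} (observing that its proof is choice-free) to conclude quasi-metrizability. You instead realize $(\mathbb{R},\tau_{nat}^{\star})$ inside the one-point compactification $S^{1}$ via stereographic projection and write down the explicit quasi-metric $\rho(x,y)=\min\{d(x,y),d(p,y)\}$; your case analysis for the triangle inequality, the identification of the $\rho$-balls with sets $(B_d(x,\varepsilon)\cup B_d(p,\varepsilon))\setminus\{p\}$, and the characterization of $\tau_{nat}$-compact sets as closed subsets of $S^{1}$ missing $p$ are all correct and all available in $\mathbf{ZF}$ (Heine--Borel for $\mathbb{R}^{2}$ and positivity of the distance from a point to a closed set not containing it need no choice). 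Note that your balls have exactly the same shape as the paper's sets $G(n,x)$ (an interval about $x$ together with a neighbourhood of infinity), so the two proofs are close in spirit; what your version buys is self-containedness (no appeal to Gruenhage's metrization-type theorem) and an evident generalization: whenever the one-point compactification of a Hausdorff space $(X,\tau)$ is metrizable by a metric $d$ (e.g., $(X,\tau)$ locally compact, separable metrizable), the same formula $\rho(x,y)=\min\{d(x,y),d(\infty,y)\}$ quasi-metrizes $(X,\tau^{\star})$. What the paper's approach buys is brevity and a template that can be applied when only a suitable nested neighbourhood assignment, rather than a compactification, is at hand.
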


\begin{proof} For $x\in\mathbb{R}$, let $m(x)=\min\{n\in\omega: \vert x\vert<n\}$. For each $x\in\mathbb{R}$ and $n\in\omega$, we define a set $G(n, x)$ by putting:
$$G(n, x)=(x-\frac{1}{2^{n+1}}, x+\frac{1}{2^{n+1}})\cup (-\infty, -m(x)-n-2)\cup(m(x)+n+2, +\infty).$$
It is clear that, for each $x\in\mathbb{R}$,  the collection $\mathcal{B}(x)=\{ G(n, x): n\in\omega\}$ is a base of neighbourhoods of $x$ in $(\mathbb{R}, \tau_{nat}^{\star})$. One can check by a simple calculation that the following condition  satisfied: for all $x, y\in\mathbb{R}$ and $n\in\omega$, if $y\in G(n+1, x)$, then $G(n+1, y)\subseteq G(n, x)$. Let us notice that Theorem 10.2 of \cite{Gr} (Chapter 10 of \cite{KV}) holds true in $\mathbf{ZF}$, so we can infer from it that $(\mathbb{R}, \tau_{nat}^{\star})$ is quasi-metrizable in $\mathbf{ZF}$.
\end{proof}

We are going to give a simple $\mathbf{ZF}$-example of a countable  metrizable space whose compact complement topology is not first countable. We shall use the following lemma in this and in the third section:

\begin{lemma} Let us assume that  $\{A_n: n\in\omega\}$ is a collection of non-empty pairwise disjoint sets, $A=\bigcup_{n\in\omega}A_n$ and $Z=A\cup\{\infty\}$ where $\infty\notin A$. For $x, y\in Z$ let  $d(x,y)=d(y,x)$ and $d(x,x)=0$; for each pair $x,y$ of distinct points of $Z$, let $d(x,y)=\max\{\frac{1}{2^n}, \frac{1}{2^m}\}$ if $x\in A_n$ and $y\in A_m$; moreover, let $d(x, \infty)=\frac{1}{2^n}$ if $x\in A_n$. Then the function $d: Z\times Z\to\mathbb{R}$ is a metric on $Z$ such that $A$ is not closed in $(Z, \tau(d))$, while each $A_n$ is a clopen discrete subspace of $(Z, \tau(d))$ where $\tau(d)$ is the topology on $Z$ induced by $d$.
\end{lemma}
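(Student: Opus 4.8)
The plan is to verify the three assertions in turn: that $d$ is a metric, that each $A_n$ is a clopen discrete subspace, and that $A$ is not closed in $(Z,\tau(d))$. The only non-routine point is the triangle inequality, so I would handle that first and in detail, then dispatch the topological claims quickly.

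First I would check that $d$ is a metric. Symmetry and $d(x,x)=0$ hold by definition, and $d(x,y)>0$ for $x\neq y$ since every value of $d$ on distinct points is a positive power $2^{-k}$. For the triangle inequality $d(x,z)\le d(x,y)+d(y,z)$, the key observation is the following uniform description: if we set $\ell(\infty)=0$ and $\ell(x)=n$ for $x\in A_n$, then for distinct points $u,v\in Z$ one has $d(u,v)=2^{-\min\{\ell(u),\ell(v)\}}$ (this covers the case $v=\infty$ as well, since $2^{-\min\{\ell(x),0\}}=2^{-0}=1\ge 2^{-n}$ — wait, here I must be careful: the stated definition gives $d(x,\infty)=2^{-n}$, i.e. $2^{-\ell(x)}=2^{-\min\{\ell(x),\ell(\infty)\}}$ only when $\ell(x)\ge 0$, which always holds, so in fact $d(x,\infty)=2^{-\ell(x)}=2^{-\min\{\ell(x),\ell(\infty)\}}$ is consistent). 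Thus $d(u,v)=2^{-\min\{\ell(u),\ell(v)\}}$ for all distinct $u,v$, and $d(u,u)=0$. Now given three points; if two of them coincide the inequality is trivial, so assume $x,y,z$ are pairwise distinct. Without loss of generality $\min\{\ell(x),\ell(z)\}$ realizes $d(x,z)$; say $\ell(x)\le\ell(z)$, so $d(x,z)=2^{-\ell(x)}$. Then $d(x,y)=2^{-\min\{\ell(x),\ell(y)\}}\ge 2^{-\ell(x)}$ when $\ell(y)\ge\ell(x)$, and $d(x,y)=2^{-\ell(y)}\ge 2^{-\ell(x)}=d(x,z)$ is false in general — instead note simply that $\max\{d(x,y),d(y,z)\}\ge d(x,z)$: indeed $d(x,z)=2^{-\ell(x)}$ and either $\ell(y)\le\ell(x)$, in which case $d(x,y)=2^{-\ell(y)}\ge 2^{-\ell(x)}$, or $\ell(y)>\ell(x)\ (\ge$ impossible to exceed both, but) then $d(x,y)=2^{-\ell(x)}=d(x,z)$. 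So $d$ satisfies the ultrametric inequality $d(x,z)\le\max\{d(x,y),d(y,z)\}\le d(x,y)+d(y,z)$, and $d$ is a metric (in fact an ultrametric).

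Next I would identify the open balls. For $x\in A_n$ and any $0<\varepsilon\le 2^{-n}$, the ball $B_d(x,\varepsilon)$ consists of points $v$ with $d(x,v)<\varepsilon\le 2^{-n}$; but $d(x,v)=2^{-\min\{n,\ell(v)\}}\ge 2^{-n}$ for every $v\neq x$, hence $B_d(x,\varepsilon)=\{x\}$. Thus every point of $A$ is isolated, so each $A_n$ (indeed all of $A$) is a discrete subspace. Moreover, for $x\in A_n$ take $\varepsilon=2^{-n-1}$; then, as just computed, $\{x\}=B_d(x,2^{-n-1})$ is open in $Z$, so $A_n=\bigcup_{x\in A_n}\{x\}$ is open in $Z$; and $A_n$ is also closed in $Z$ because its complement $\bigl(\bigcup_{m\neq n}A_m\bigr)\cup\{\infty\}$ is open — indeed every point $y\in A_m$ with $m\neq n$ has the open neighbourhood $\{y\}$ missing $A_n$, and $\infty$ has the neighbourhood $B_d(\infty,2^{-n-1})=\{\infty\}\cup\bigcup_{k>n}A_k$, which is disjoint from $A_n$. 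Hence each $A_n$ is clopen and discrete.

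Finally, $A$ is not closed in $Z$: its complement is $\{\infty\}$, and $\infty$ is not isolated, since for every $\varepsilon>0$ we may pick $n$ with $2^{-n}<\varepsilon$ and any point of the nonempty set $A_n$ lies in $B_d(\infty,\varepsilon)$, so $B_d(\infty,\varepsilon)\not\subseteq\{\infty\}$; equivalently $\infty\in\mathrm{cl}_{\tau(d)}A\setminus A$. This completes the verification, and I expect no real obstacle beyond bookkeeping in the triangle inequality, which the ultrametric reformulation $d(u,v)=2^{-\min\{\ell(u),\ell(v)\}}$ makes transparent.
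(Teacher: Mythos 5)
Your overall plan (a uniform formula for $d$ plus an ultrametric/max inequality, then routine ball computations) is the same as the paper's, which settles the triangle inequality by the single observation that $\max\{a,b\}\le\max\{a,c\}+\max\{c,b\}$ for non-negative $a,b,c$ and calls the topological claims obvious. However, your key bookkeeping step contains a genuine error: with $\ell(\infty)=0$ the formula $d(u,v)=2^{-\min\{\ell(u),\ell(v)\}}$ is \emph{false} for pairs involving $\infty$, since $2^{-\min\{n,0\}}=2^{0}=1\neq 2^{-n}=d(x,\infty)$ for $x\in A_n$ with $n\geq 1$; your parenthetical ``consistency check'' asserts an equality $2^{-\ell(x)}=2^{-\min\{\ell(x),0\}}$ that simply does not hold. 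This is not harmless in principle: if $d(x,\infty)$ really were $1$ for all $x\in A$, then $\infty$ would be isolated and $A$ would be closed, i.e.\ exactly the conclusion of the lemma would fail, so the case your formula mishandles is the essential one. The repair is immediate: set $\ell(\infty)=+\infty$ (reading $2^{-\infty}$ as $0$), or equivalently write $d(u,v)=\max\{r(u),r(v)\}$ for distinct $u,v$ where $r(x)=2^{-n}$ for $x\in A_n$ and $r(\infty)=0$; with that, your ultrametric argument (or the paper's one-line inequality) goes through for all triples, including those containing $\infty$.

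The remainder of your write-up is correct and, notably, it silently uses the true values $d(x,\infty)=2^{-n}$ rather than your formula, which is why the ball computations and the non-closedness of $A$ come out right: $B_d(x,\varepsilon)=\{x\}$ for $x\in A_n$ and $\varepsilon\le 2^{-n}$, so $A$ is open and discrete and each $A_n$ is open; and every ball around $\infty$ meets some $A_n$, so $\infty\in\mathrm{cl}_{\tau(d)}A\setminus A$, whence $A$ is not closed and each $A_n$ is also closed. One small slip there: $B_d(\infty,2^{-n-1})=\{\infty\}\cup\bigcup_{k\ge n+2}A_k$, not $\{\infty\}\cup\bigcup_{k>n}A_k$, because points of $A_{n+1}$ lie at distance exactly $2^{-n-1}$ from $\infty$; the property you actually need, disjointness of this ball from $A_n$, is of course still true. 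So the approach is sound and matches the paper's, but the $\ell(\infty)=0$ step as written is wrong and must be corrected as above.
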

\begin{proof} Using the fact that  $\max\{a,b\}\leq\max\{a, c\}+\max\{c, b\}$ for all non-negative real numbers $a,b,c$, one can easily check that $d$ is a metric on $Z$. Since $\infty\in\text{cl}_{\tau(d)} A$, the set $A$ is not closed in $(Z,\tau(d))$. It is obvious that each $A_n$  is a clopen discrete subspace of $(Z, \tau(d))$.
\end{proof}

\begin{example} Let  $A_n=\{n\}\times \omega$ for each $n\in\omega$ and let $A=\bigcup_{n\in\omega}A_n$ Take a point $\infty\notin A$ and put $Z=A\cup\{\infty\}$. Consider the metric $d$ on $Z$ defined in Lemma 2.22. Suppose that the point $(0, 0)$ has a countable base $\{V_n: n\in\omega\}$ of open neighbourhoods in $(Z, \tau({d})^{\star})$ where $\tau(d)$ is as in Lemma 2.22. We may assume that $V_n\subseteq V_0$ for each $n\in\omega$.  The sets $Z\setminus V_n$ are all $\tau(d)$-compact, while the sets $A_n$ are not $\tau(d)$-compact because they are infinite discrete subspaces of $(Z,\tau(d))$. Hence, $A_n\cap V_n\neq\emptyset$ for each $n\in\omega$. For $n\in\omega$, let $a_n=\min\{m\in\omega: (n, m)\in A_n\cap V_n\}$. We define points $x_n\in A_n\cap V_n$ by putting $x_n=(n, a_n)$ for $n\in\omega$. Notice that the set $K=\{ x_n: n\in\omega\setminus\{0\}\}\cup\{\infty\}$ is $\tau(d)$-compact, while $(0,0)\notin K$. Then $V=Z\setminus K$ is an open neighbourhood of $(0, 0)$ in $(Z, \tau(d)^{\star})$. There must exist  $n\in\omega$ such that $V_n\subseteq V$. This is impossible because $V_n\subseteq V_0$ and $x_n\in V_n$ for each $n\in\omega\setminus\{0\}$. The contradiction obtained proves that  $(Z, \tau(d)^{\star})$ is not first countable. Obviously, $(Z, \tau(d))$ is $\sigma$-compact because $Z$ is countable. Of course, $(Z, \tau(d))$ is second countable as a separable metrizable space.  The point $\infty$ is not a point of local compactness of $(Z, \tau(d))$. This example shows that, in Theorem 2.12, the assumption of local compactness of $(X, \tau)$ cannot be replaced by the assumption that the set of points of non-local compactness of $(X,\tau)$ is finite. 
\end{example}

An arbitrary example of a metrizable second countable not $\sigma$-compact space also shows that the assumption of local compactness is essential in Theorem 2.12.

\begin{example} Let $X=\mathbb{R}\setminus\mathbb{Q}$ and let $\tau=\tau_{nat}\vert X$. Then the space of irrationals $(X, \tau)$ is second countable. That $(X, \tau)$ is not $\sigma$-compact in $\mathbf{ZF}$ can be shown by using the facts that the Baire category theorem holds in $\mathbf{ZF}$ in the class of separable completely metrizable spaces (see Theorem 4.102 of \cite{Her}) and that every compact subspace of $(X, \tau)$ is nowhere dense in $(X, \tau)$.  This is why the compact complement topology $(\tau_{nat}\vert X)^{\star}$ is not first countable, so it is not second countable. Of course, the space of irractionals is not locally compact at each one of its points. 
\end{example}

\begin{remark} It was shown in Theorem 2.7 of \cite{W} that if $\mathcal{T}$ is the co-finite topology on a set $Z$, then the space $(Z, \mathcal{T})$ is quasi-metrizable if and only if $Z$ is a countable union of finite sets. Now, suppose that $\tau$ is the discrete topology on $X$, i.e. $\tau$  is the power set $\mathcal{P}(X)$ of $X$. Then $\tau^{\star}$ is the co-finite topology on $X$. Hence, for $\tau=\mathcal{P}(X)$, the space $(X, \tau^{\star})$ is quasi-metrizable if and only if $X$ is a countable union of finite sets. In some models of $\mathbf{ZF}$ in which a countable union of finite sets can fail to be countable, even when $X$ is uncountable and $\tau=\mathcal{P}(X)$, then $(X, \tau^{\star})$ can be quasi-metrizable (see \cite{W}).
\end{remark}

The following question does not seem to be trivial:

\begin{question}  What are, expressed in terms of $\tau$,  simultaneously necessary and sufficient conditions for $(X, \tau^{\star})$ to be quasi-metrizable when $(X, \tau)$ is a $\sigma$-compact quasi-metrizable space?
\end{question}

\begin{remark} Let us consider the case when $(X, \tau)$ is not compact. We notice that if $p$ and $\hat{p}$ are properties such that a topological space $Z$ has $p$ if and only if $Z$ is Hausdorff and has $\hat{p}$, then, in view of Proposition 2.9, the space $(X, \tau^{\star})$ does not have $p$. In particular, $(X, \tau^{\star})$ is not a $T_i$-space for $i\in\{2, 3, 3\frac{1}{2}, 4, 5, 6\}$. It is easily seen that $(X, \tau^{\star})$ is neither regular, nor completely regular, nor normal. Every continuous mapping from $(X, \tau^{\star})$ to a Hausdorff space is constant.
\end{remark}

\begin{theorem} Let $A\subseteq X$. Then $A$ is $\tau^{\star}$-compact if and only if $A\cap K$ is $\tau$-closed for each $\tau$-compact set $K$.
\end{theorem}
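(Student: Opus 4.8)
The plan is to prove both directions straight from the definition of $\tau^{\star}$, using Theorem~\ref{theorem1}(ii) as the bridge between $\tau$ and $\tau^{\star}$ on $\tau$-compact pieces, together with two facts available in $\mathbf{ZF}$: a closed subspace of a compact space is compact, and (by Hausdorffness) every $\tau$-compact set is $\tau$-closed, as recalled just after Definition~\ref{star}. The structural point that makes everything run is that, directly from the definition of $\tau^{\star}$, every $K\in\mathcal{K}(\tau)$ is itself $\tau^{\star}$-closed, because $X\setminus K\in\tau^{\star}$.

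For necessity, I would assume $A$ is $\tau^{\star}$-compact and fix an arbitrary $K\in\mathcal{K}(\tau)$. Since $K$ is $\tau^{\star}$-closed, $A\cap K$ is a $\tau^{\star}$-closed subspace of the $\tau^{\star}$-compact space $A$, hence $\tau^{\star}$-compact. Now $A\cap K\subseteq K$ and $K\in\mathcal{K}(\tau)$, so Theorem~\ref{theorem1}(ii) gives $\tau^{\star}\vert K=\tau\vert K$, and restricting once more to $A\cap K$ gives $\tau^{\star}\vert(A\cap K)=\tau\vert(A\cap K)$; therefore $A\cap K$ is also $\tau$-compact, hence $\tau$-closed since $(X,\tau)$ is Hausdorff.

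For sufficiency, I would assume $A\cap K$ is $\tau$-closed for every $K\in\mathcal{K}(\tau)$ (the case $A=\emptyset$ being trivial) and take an arbitrary cover $\mathcal{V}\subseteq\tau^{\star}$ of $A$, choosing some nonempty $V_0\in\mathcal{V}$, say $V_0=X\setminus K_0$ with $K_0\in\mathcal{K}(\tau)$. Then $A\setminus V_0=A\cap K_0$ is $\tau$-closed by hypothesis and contained in the $\tau$-compact set $K_0$, so it is $\tau$-compact; since $\tau^{\star}\subseteq\tau$, the family $\mathcal{V}$ is a $\tau$-open cover of $A\cap K_0$, so it has a finite subfamily $\mathcal{V}'$ covering $A\cap K_0$, and then $\mathcal{V}'\cup\{V_0\}$ is a finite subcover of $A$. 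Since no form of choice enters, this works in $\mathbf{ZF}$.

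I do not expect a real obstacle. The only points needing care are keeping track of the compactness of $A\cap K$ as a subspace (inside $(X,\tau^{\star})$, inside $K$, and inside $(X,\tau)$) and observing that the $\tau^{\star}$-closed sets are precisely $X$ together with the members of $\mathcal{K}(\tau)$, which is exactly what makes Theorem~\ref{theorem1}(ii) applicable here. I would also note in passing that the stated condition is nothing but $k$-closedness relative to $(X,\tau)$, which is presumably why this theorem opens the road to the new characterization of $k$-spaces in the next section.
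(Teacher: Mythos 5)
Your proof is correct, and the sufficiency half coincides with the paper's argument almost word for word (fix a nonempty $V_0=X\setminus K_0$ in the $\tau^{\star}$-open cover, note $A\cap K_0$ is $\tau$-compact, use $\tau^{\star}\subseteq\tau$ to extract a finite subfamily, and adjoin $V_0$). The necessity half is where you diverge mildly from the paper: the paper proves $\tau$-compactness of $A\cap K$ directly by the finite-intersection-property characterization, taking a centred family $\{F\cap A\cap K: F\in\mathcal{F}\}$ with $\mathcal{F}$ consisting of $\tau$-closed sets, observing that each $K\cap F$ is $\tau$-compact and hence $\tau^{\star}$-closed, so the family consists of $\tau^{\star}\vert A$-closed sets and has nonempty intersection by $\tau^{\star}$-compactness of $A$. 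You instead note that $K$ itself is $\tau^{\star}$-closed, deduce that $A\cap K$ is $\tau^{\star}$-compact as a closed subspace of the compact space $(A,\tau^{\star}\vert A)$, and then transfer compactness from $\tau^{\star}$ to $\tau$ by Theorem~\ref{theorem1}(ii) restricted to $A\cap K\subseteq K$. Both routes are choice-free and rest on the same underlying observation that $\tau$-compact sets (and their $\tau$-closed subsets) are $\tau^{\star}$-closed; yours has the small advantage of reusing Theorem~\ref{theorem1}(ii) instead of re-running a centred-family argument by hand, while the paper's version is self-contained at that point. Your closing remark that the condition is exactly $k$-closedness in $(X,\tau)$ is indeed how the paper exploits this theorem in Section~3.
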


\begin{proof} \emph{Necessity}. Suppose  that $A$ is $\tau^{\star}$-compact. Let $K$ be a $\tau$-compact set. Since $(X, \tau)$ is Hausdorff, to show that $A\cap K$ is $\tau$-closed, it suffices to check that $A\cap K$ is $\tau$-compact. Let $\mathcal{F}$ be a collection of $\tau$-closed sets such that the collection $\mathcal{H}=\{F\cap A\cap K: F\in\mathcal{F}\}$ is centred.  The sets $A\cap K\cap F$ for $F\in\mathcal{F}$ are all $\tau^{\star}\vert A$-closed. Since $A$ is $\tau^{\star}$-compact and $\mathcal{H}$ is a centred collection of $\tau^{\star}\vert A$-closed sets, we have  that $\bigcap\mathcal{H}\neq\emptyset$. This proves that $A\cap K$ is $\tau$-compact.

\emph{Sufficiency}. Now, suppose that $A\cap K$ is $\tau$-compact for each $\tau$-compact set $K$. We may assume that $A\neq\emptyset$.  Let $\mathcal{U}$ be a non-empty collection of non-empty sets such that $\mathcal{U}\subseteq \tau^{\star}$, while $A\subseteq\bigcup\mathcal{U}$. Fix any  non-empty set $U_0\in\mathcal{U}$. The set $C_0=X\setminus U_0$ is $\tau$-compact, so $A\cap C_0$ is $\tau$-compact as a $\tau$-closed subset of a $\tau$-compact set. Notice that $A\cap C_0\subseteq \bigcup\mathcal{U}$ and, by Theorem 2.2, $\mathcal{U}\subseteq\tau$. By the $\tau$-compactness of $A\cap C_0$, there exists a finite collection $\mathcal{V}\subseteq\mathcal{U}$ such that $A\cap C_0\subseteq\bigcup\mathcal{V}$. Then $A\subseteq U_0\cup\bigcup\mathcal{V}$. This proves that $A$ is $\tau^{\star}$-compact.
\end{proof}

\begin{corollary} For every Hausdorff space $(X, \tau)$, the space $(X, \tau^{\star})$ is compact.
\end{corollary}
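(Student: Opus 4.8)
The plan is to deduce this immediately from Theorem 2.30 by taking $A = X$. First I would observe that $X$ is trivially a subset of $X$, so the theorem applies to it. The condition to verify is that $X \cap K$ is $\tau$-closed for every $\tau$-compact set $K$. But $X \cap K = K$, and since $(X,\tau)$ is Hausdorff, every $\tau$-compact subset of $X$ is $\tau$-closed (Theorem 3.1.8 of \cite{E}, already invoked after Definition \ref{star}); hence $X \cap K = K$ is $\tau$-closed for each $\tau$-compact $K$.

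Therefore, by Theorem 2.30, the set $X$ is $\tau^{\star}$-compact, which is precisely the assertion that the topological space $(X, \tau^{\star})$ is compact. No further work is needed.

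There is essentially no obstacle here: the corollary is a one-line specialization of Theorem 2.30, and the only ingredient beyond that theorem is the $\mathbf{ZF}$-provable fact that compact subsets of Hausdorff spaces are closed, which has already been used in the paper. One could alternatively give a direct argument: given a cover $\mathcal{U} \subseteq \tau^{\star}$ of $X$ by non-empty sets, pick $U_0 \in \mathcal{U}$, note $X \setminus U_0$ is $\tau$-compact and covered by $\mathcal{U} \subseteq \tau$ (Theorem \ref{theorem1}(i)), extract a finite subcover $\mathcal{V}$, and then $\{U_0\} \cup \mathcal{V}$ is a finite subcover of $X$; but invoking Theorem 2.30 is cleaner, so that is the route I would take.
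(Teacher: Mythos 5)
Your proof is correct and matches the paper's intent: the corollary is stated there without proof precisely because it follows by taking $A=X$ in the preceding theorem on $\tau^{\star}$-compact sets, using the $\mathbf{ZF}$ fact that $\tau$-compact subsets of a Hausdorff space are $\tau$-closed, exactly as you argue. Your alternative direct covering argument is also fine, but the specialization you chose is the intended route.
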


A topological space $(Z, \mathcal{T})$ is called \emph{jointly partially metrizable on compact subspaces}, if there is a metric $d$ on $Z$ such that, for every compact subspace $A$ of $(Z, \mathcal{T})$, the restriction of $d$ to $A\times A$ generates the subspace topology $\mathcal{T}\vert A$ on $A$ (see  \cite{ArhShu}).

\begin{example}
The space $(\mathbb{R},\tau_{nat})$ is metrizable, hence jointly partially metrizable on compact subspaces. But $(\mathbb{R}, \tau_{nat}^{\star})$ is not jointly partially metrizable on compact subspaces since it is compact and not metrizable for it is not Hausdorff.
\end{example}

A topological space $Z$ is called C-\emph{normal} if there exists a normal space $Y$ and a bijective function $f : Z \rightarrow Y$ such that the restriction $f|A : A \rightarrow f(A)$ is a homeomorphism for each compact subspace $A$ of $Z$ (see \cite{KaZah}).

\begin{example}
The space $(\mathbb{R},\tau_{nat})$ is C-normal. But $(\mathbb{R}, \tau_{nat}^{\star})$ is not C-normal since it is compact and not normal.
\end{example}

\section{$k$-spaces}

Let us recall that a Hausdorff space $Z$ is called a \emph{k-space} if, for every set $A\subseteq Z$, it holds true that  $A$ is closed in $Z$ if and only if $A\cap K$ is closed in $Z$ for each compact set $K$ in $Z$ (see Section 3.3 of \cite{E}).

We deduce directly from Theorem 2.28  the following characterization of $k$-spaces:

\begin{theorem} For every Hausdorff space $(X, \tau)$, it holds true in $\mathbf{ZF}$ that $(X, \tau)$ is a $k$-space if and only if every $\tau^{\star}$-compact subset of $X$ is $\tau$-closed.
\end{theorem}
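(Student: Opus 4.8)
The plan is to derive this as an almost immediate consequence of Theorem~2.28, after unwinding the definition of a $k$-space into a more convenient form. First I would record the trivial half of the $k$-space biconditional: for any $A\subseteq X$ and any $\tau$-compact $K$, the set $K$ is $\tau$-closed (a compact subspace of a Hausdorff space is closed, which holds in $\mathbf{ZF}$), so if $A$ is $\tau$-closed then $A\cap K$ is $\tau$-closed as an intersection of two $\tau$-closed sets. Consequently, $(X,\tau)$ being a $k$-space is equivalent to the single implication: for every $A\subseteq X$, if $A\cap K$ is $\tau$-closed for each $\tau$-compact $K$, then $A$ is $\tau$-closed.

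Next I would invoke Theorem~2.28, which states precisely that $A$ is $\tau^{\star}$-compact if and only if $A\cap K$ is $\tau$-closed for each $\tau$-compact $K$. Substituting this characterization into the reformulation from the previous step, $(X,\tau)$ is a $k$-space if and only if: for every $A\subseteq X$, if $A$ is $\tau^{\star}$-compact, then $A$ is $\tau$-closed. This is exactly the assertion that every $\tau^{\star}$-compact subset of $X$ is $\tau$-closed, so the two conditions coincide and the proof is complete.

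Since this is a short derivation rather than a substantial argument, there is no real obstacle; the only point requiring a moment's care is the observation that the ``easy'' direction of the $k$-space defining biconditional ($A$ closed $\Rightarrow$ $A\cap K$ closed) is automatic, so that being a $k$-space reduces to the converse implication alone — and this uses only the standing Hausdorff hypothesis, which guarantees $\tau$-compact sets are $\tau$-closed in $\mathbf{ZF}$. Everything else is a direct appeal to Theorem~2.28, whose proof was already carried out in $\mathbf{ZF}$, so the equivalence holds in $\mathbf{ZF}$ as claimed.
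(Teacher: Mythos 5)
Your proposal is correct and follows the same route as the paper, which derives the theorem directly from Theorem~2.28; you merely spell out the routine reduction (the ``easy'' direction of the $k$-space biconditional being automatic since $\tau$-compact sets are $\tau$-closed in Hausdorff spaces in $\mathbf{ZF}$) that the paper leaves implicit. Nothing further is needed.
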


We recall definitions of sequential and Fr\'echet-Urysohn spaces for completeness.

\begin{definition} Let $Z$ be a topological space and $A\subseteq Z$. Then:
\begin{enumerate}
\item[(i)] $A^{s}$  denotes the set of all points $z\in Z$ such that there exists a
sequence $(z_{n})_{n\in\omega}$ of points of $A\setminus \{z\}$ which converges in $Z$ to the point $z$;
\item[(ii)] $A$ is called \emph{sequentially
closed} if $A^{s}\subseteq A$;
\item[(iii)]  the \emph{sequential closure} of $A$ in $Z$ is the set $\text{scl}_Z(A)=A^{s}\cup A$;
\item[(iv)] $Z$ is called \emph{sequential} (resp. \emph{Fr\'echet-Urysohn}) if every
sequentially closed subset of $Z$ is closed in $Z$ (resp. for every $F\in
\mathcal{P}(Z)$ the equality $\text{scl}_Z(F)=\text{cl}_Z(F)$ holds).
\end{enumerate}
\end{definition}

In some texts,  Fr\'echet-Urysohn spaces are called Fr\'echet spaces (see, for instance, \cite{E} and \cite{Her}). It is well known that the following series of implications hold true in $\mathbf{ZFC}$ and, in general, none of them
is reversible in $\mathbf{ZFC}$ (see, e.g. Sections 1.6 and Theorem 3.3.20 of \cite{E}):

\begin{itemize}
\item $Z$ is Hausdorff and first countable $\rightarrow $ $Z$ is Hausdorff and Fr\'echet-Urysohn $\rightarrow $
$Z$ is Hausdorff and sequential $\rightarrow $ $Z$ is a $k$-space.
\end{itemize}

Of course, the proof to Theorem 3.3.20 of \cite{E} shows that it is true in $\mathbf{ZF}$ that every Hausdorff sequential space is a $k$-space. That even $\mathbb{R}$ can fail to be sequential in a model of $\mathbf{ZF}$ is shown in Theorem 4.55 of \cite{Her}. The second part of Theorem 3.3.20 of \cite{E}, which states that every first countable Hausdorff space is a $k$-space, does not have a proof in $\mathbf{ZF}$. Therefore, since we work in $\mathbf{ZF}$, it is natural to ask about set-theoretical status of the following sentences:
\begin{enumerate}
\item[(a)] Every first countable Hausdorff space is a $k$-space.
\item[(b)] $\mathbb{R}$ is a $k$-space.
\item[(c)] Every subspace of $\mathbb{R}$ is a $k$-space.
\end{enumerate}

In this section,  we are going to prove that (a) is equivalent with the axiom of countable multiple choice (i.e. Form 126 in \cite{HR}), while (b) holds in $\mathbf{ZF}$ and (c) is independent of $\mathbf{ZF}$. We shall also show that even the Sorgenfrey line can fail to be a $k$-space in a model of $\mathbf{ZF}$.

We recall that the axiom of countable multiple choice, denoted by $\mathbf{CMC}$ in \cite{Her}, states that, for every  collection $\{A_n: n\in\omega\}$ of non-empty sets there exists a collection $\{F_n: n\in\omega\}$ of non-empty finite sets such that $F_n\subseteq A_n$ for each $n\in\omega$. It was shown in \cite{Ker1} that $\mathbf{CMC}$ is equivalent with Form 126D of \cite{HR}, i.e with the following sentence denoted by $\mathbf{WCMC}$:

$\textbf{WCMC}$: For every denumerable family $\mathcal{A}$ of disjoint non-empty sets there is an infinite set $C\subseteq \bigcup\mathcal{A}$ such that, for each $A\in\mathcal{A}$ the intersection $A\cap C$ is finite.

More information about $\mathbf{WCMC}$ can be found in \cite{Ker1} and in  Note 132 of \cite{HR}.

If $\mathcal{A}$ is a denumerable collection of pairwise disjoint non-empty sets, then every infinite set $C\subseteq\bigcup\mathcal{A}$ such that $C\cap A$ is finite for each $A\in\mathcal{A}$ is called a \emph{partial multiple choice} set of $\mathcal{A}$.

\begin{theorem} The following conditions are all equivalent in $\mathbf{ZF}$:
\begin{enumerate}
\item[(i)] $\mathbf{CMC}$;
\item[(ii)] every Hausdorff first countable space is a $k$-space;
\item[(iii)] every metrizable space is a $k$-space.
\end{enumerate}
\end{theorem}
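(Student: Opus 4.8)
The plan is to prove the cycle (i)$\Rightarrow$(ii)$\Rightarrow$(iii)$\Rightarrow$(i), using Theorem 2.28 and Theorem 3.1 to recast the $k$-space condition in terms of $\tau^{\star}$-compactness, and using the metric space of Lemma 2.22 for the reversal. The implication (ii)$\Rightarrow$(iii) is immediate, since in $\mathbf{ZF}$ every metric space $(Z,d)$ is Hausdorff and first countable: at each $z\in Z$ the balls $B(z,1/n)$, $n\in\omega\setminus\{0\}$, form a neighbourhood base, and this base is exhibited with no appeal to choice.

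For (i)$\Rightarrow$(ii), assume $\mathbf{CMC}$, let $(X,\tau)$ be a Hausdorff first countable space, and let $A\subseteq X$ be $\tau^{\star}$-compact; by Theorem 3.1 it suffices to show that $A$ is $\tau$-closed. Fix $x\in\text{cl}_{\tau}A$; I will show $x\in A$. Choose a neighbourhood base $\{U_n:n\in\omega\}$ of $x$ in $(X,\tau)$ and, replacing $U_n$ by $U_0\cap\dots\cap U_n$, assume it is decreasing. Each set $A\cap U_n$ is non-empty, so $\mathbf{CMC}$ provides non-empty finite sets $F_n\subseteq A\cap U_n$. Put $K=\{x\}\cup\bigcup_{n\in\omega}F_n$. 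Then $K$ is $\tau$-compact by a choice-free argument: in any $\tau$-open cover of $K$, some member $W$ contains $x$, hence contains some $U_{n_0}$, hence contains every $F_m$ with $m\geq n_0$, while $\bigcup_{m<n_0}F_m$ is finite and so is covered by finitely many members of the cover. By the necessity part of Theorem 2.28, $A\cap K$ is $\tau$-closed. Were $x\notin A$, we would have $A\cap K=\bigcup_{n\in\omega}F_n$; but every $U_k$ contains the non-empty set $F_k$, so $x\in\text{cl}_{\tau}(\bigcup_{n\in\omega}F_n)$, contradicting that $A\cap K$ is $\tau$-closed. Hence $x\in A$, so $A$ is $\tau$-closed and, by Theorem 3.1, $(X,\tau)$ is a $k$-space.

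For (iii)$\Rightarrow$(i), it suffices to deduce $\mathbf{WCMC}$, which is equivalent to $\mathbf{CMC}$ by \cite{Ker1}. Let $\mathcal{A}=\{A_n:n\in\omega\}$ be a denumerable family of pairwise disjoint non-empty sets, and let $(Z,\tau(d))$ be the metric space associated with $\mathcal{A}$ in Lemma 2.22, where $A=\bigcup_{n\in\omega}A_n$ and $Z=A\cup\{\infty\}$. By Lemma 2.22, $A$ is not $\tau(d)$-closed and each $A_n$ is a clopen discrete subspace of $Z$; hence every point of $A$ is isolated in $Z$, and $\{\infty\}$ is not open (otherwise $A=Z\setminus\{\infty\}$ would be closed). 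Since $(Z,\tau(d))$ is metrizable, (iii) makes it a $k$-space, so the failure of $A$ to be $\tau(d)$-closed produces a $\tau(d)$-compact set $K$ for which $A\cap K$ is not $\tau(d)$-closed. A compact subset of $Z$ not containing $\infty$ consists of isolated points, hence is finite and therefore $\tau(d)$-closed; so $\infty\in K$. For each $n$, the set $K\cap A_n$ is a compact (since $A_n$ is closed in $Z$) discrete (since $A_n$ is discrete) subspace of $Z$, hence finite. Put $C=A\cap K=\bigcup_{n\in\omega}(K\cap A_n)$. Then $C\subseteq\bigcup\mathcal{A}$ and $C\cap A_n=K\cap A_n$ is finite for every $n$. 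Finally, if $K\cap A_n$ were non-empty for only finitely many $n$, then $C$ would be finite, hence $\tau(d)$-closed, which is false; so $K\cap A_n\neq\emptyset$ for infinitely many $n$, and, the $A_n$ being pairwise disjoint, $C$ is infinite. Thus $C$ is a partial multiple choice set of $\mathcal{A}$, and $\mathbf{WCMC}$ follows.

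The heart of the matter is the reversal (iii)$\Rightarrow$(i): the point is that, applied to the non-closed set $A$ of the space of Lemma 2.22, the $k$-space property hands us a single compact set $K$ for which $A\cap K$ is automatically a partial multiple choice set — no further choices are needed, because the mere failure of $A\cap K$ to be $\tau(d)$-closed forces it to meet infinitely many of the $A_n$. One must also take care that the compactness of $K=\{x\}\cup\bigcup_{n\in\omega}F_n$ in (i)$\Rightarrow$(ii) is genuinely a $\mathbf{ZF}$-argument, i.e., uses only finitely many selections at the end.
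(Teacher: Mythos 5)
Your proof is correct and follows essentially the same route as the paper: $\mathbf{CMC}$-selected finite sets inside a decreasing neighbourhood base yield the compact "fan" witnessing (i)$\Rightarrow$(ii), and the metric space of Lemma 2.22 converts the $k$-space property into a partial multiple choice set, giving $\mathbf{WCMC}$ and hence (iii)$\Rightarrow$(i). The only differences are cosmetic: you argue directly (via Theorems 2.28 and 3.1) where the paper argues contrapositively from the definition of a $k$-space, and your annulus-free choice of the sets $F_n\subseteq A\cap U_n$ replaces the paper's sets $D_n=D\cap(U_{k_n}\setminus U_{k_{n+1}})$.
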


\begin{proof}  Let $Y$ be a first countable Hausdorff space and let $D$ be a subset of $Y$ which is not closed in $Y$. Fix in $Y$ an accumulation point $y$ of $D$ such that $y\notin D$. Let $\mathcal{B}(y)=\{ U_n: n\in\omega\}$ be a countable base of open neighbourhoods of $y$ in $Y$ such that $U_{n+1}\subset U_n$ for each $n\in\omega$. Since $Y$ is Hausdorff, we can find a strictly increasing sequence $(k_n)_{n\in\omega}$ of positive integers such that the set $D_n= D\cap(U_{k_n}\setminus U_{k_{n+1}})$ is non-empty for each $n\in\omega$. Suppose  that $\mathbf{CMC}$ holds. By $\mathbf{CMC}$,  there exists a sequence $(C_n)_{n\in\omega}$ of non-empty finite sets such that $C_n\subseteq D_n$ for each $n\in\omega$. Then the set $C=\{y\}\cup\bigcup_{n\in\omega} C_n$ is compact in $Y$, while $y$ is an accumulation point of $D\cap C$ and $y\notin D\cap C$. Thus $D\cap C$ is not closed in $Y$. Therefore,  $Y$ is a $k$-space if $\mathbf{CMC}$ holds. Hence, (i) implies (ii). It is obvious that (ii) implies (iii). To complete the proof, it suffices to show that (iii) implies $\mathbf{WCMC}$.

Now, let us assume that $\mathbf{WCMC}$ is false. Suppose that $\mathcal{A}=\{A_n : n\in\omega\}$ is a collection of pairwise
disjoint non-empty sets without a partial multiple choice set. Put $A=\bigcup_{n\in\omega}A_n$. Take a point $\infty\notin A$ and put $Z=A\cup\{\infty\}$. Consider the metric $d$ on $Z$ defined in Lemma 2.22, as well as the topology $\tau(d)$ on $Z$ induced by $d$. Let $K$ be a compact subspace of $(Z, \tau(d))$. Since each $A_n$ is a discrete clopen subspace of $(Z, \tau(d))$, the sets $K\cap A_n$ are all finite. If $K$ were infinite, then $K$ would be a multiple choice set of $\mathcal{A}$. Hence, $K$ is finite, so $A\cap K$ is compact in $(Z, \tau(d))$. By Lemma 2.22,  $A$ is not closed in $(Z, \tau(d))$. This shows that $(Z, \tau(d))$ is not a $k$-space. Hence, (iii) implies (i).
\end{proof}

\begin{corollary} It is consistent with $\mathbf{ZF}$ that not every metrizable space is a $k$-space.
\end{corollary}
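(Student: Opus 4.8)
The plan is to derive this immediately from Theorem 3.4 together with the classical fact that $\mathbf{CMC}$ is not a theorem of $\mathbf{ZF}$. By Theorem 3.4, the following are equivalent in $\mathbf{ZF}$: $\mathbf{CMC}$, that every Hausdorff first countable space is a $k$-space, and condition (iii), that every metrizable space is a $k$-space. Hence $\mathbf{ZF}$ proves that ``there exists a metrizable space that is not a $k$-space'' is equivalent to $\neg\mathbf{CMC}$, and so it suffices to exhibit a model of $\mathbf{ZF}$ in which $\mathbf{CMC}$ fails; in that model Theorem 3.4 then delivers a metrizable non-$k$-space, which is what the corollary claims.

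To produce such a model I would work first in $\mathbf{ZFA}$ and take the Fraenkel--Mostowski permutation model of the following shape: the atoms are partitioned into a denumerable family $\{B_n : n\in\omega\}$ of infinite ``blocks'', the group $\mathcal{G}$ is the set of permutations of the atoms that carry each $B_n$ onto itself, and the normal filter is generated by the pointwise stabilizers of the finite sets of atoms. A standard support argument shows that $\mathbf{WCMC}$ fails for $\mathcal{A}=\{B_n : n\in\omega\}$: if $C\subseteq\bigcup\mathcal{A}$ is infinite with $C\cap B_n$ finite for every $n$, and $E$ is a finite support of $C$, then for each $n$ the set $C\cap B_n$ is invariant under every $\pi\in\mathcal{G}$ fixing $E$ pointwise, so $(C\cap B_n)\setminus E$ is either empty or equal to $B_n\setminus E$; since $C\cap B_n$ is finite and $B_n\setminus E$ is infinite, only the first alternative can hold, whence $C\subseteq E$ --- contradicting that $C$ is infinite. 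Thus $\mathcal{A}$ has no partial multiple choice set in this model. I would then transfer this failure to a genuine model of $\mathbf{ZF}$ by the Jech--Sochor embedding theorem (see \cite{J}); alternatively, the parallel symmetric-forcing construction over a model of $\mathbf{ZFC}$ --- adjoining $\omega$ mutually generic infinite blocks of Cohen reals with the analogous symmetry group and finite supports --- produces a $\mathbf{ZF}$-model with the same feature. In any event, the existence of $\mathbf{ZF}$-models falsifying $\mathbf{CMC}$ is recorded in \cite{HR}, where Form 126 appears as unprovable in $\mathbf{ZF}$.

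Inside such a model the metric space $(Z,\tau(d))$ of Lemma 2.22 built over $\mathcal{A}=\{B_n : n\in\omega\}$ is metrizable but is not a $k$-space: by Lemma 2.22 each $B_n$ is a clopen discrete --- hence infinite non-compact --- subspace, so every $\tau(d)$-compact set meets each $B_n$ in a finite set; an infinite compact set would then yield a partial multiple choice set of $\mathcal{A}$, which is impossible, so every compact subspace of $(Z,\tau(d))$ is finite and therefore closed. Thus $A=\bigcup_{n\in\omega}B_n$ meets every compact set in a closed set, while $A$ itself is not $\tau(d)$-closed by Lemma 2.22; this is precisely the computation carried out in the implication (iii)$\Rightarrow$(i) inside the proof of Theorem 3.4. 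I do not expect a genuine obstacle: all the topological content already sits in Theorem 3.4, and the only external ingredient is the independence of $\mathbf{CMC}$ from $\mathbf{ZF}$. The single point requiring care is the transition from the $\mathbf{ZFA}$ permutation model to a true model of $\mathbf{ZF}$, which is handled by the Jech--Sochor theorem or by the direct symmetric extension just described.
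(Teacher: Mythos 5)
Your proposal is correct and takes essentially the same route as the paper, which leaves the corollary as an immediate consequence of Theorem 3.3 (the equivalence of $\mathbf{CMC}$ with ``every metrizable space is a $k$-space'') together with the known fact, recorded in \cite{HR} for Form 126, that $\mathbf{CMC}$ is unprovable in $\mathbf{ZF}$. Your extra sketch of a Fraenkel--Mostowski model with infinite blocks, the support argument refuting $\mathbf{WCMC}$, and the Jech--Sochor transfer is sound but not required (and note the equivalence you invoke is the paper's Theorem 3.3, not 3.4); likewise, re-running the construction from Lemma 2.22 inside the model merely repeats the implication (iii)$\Rightarrow$(i) already proved there.
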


\begin{theorem} If $\mathcal{M}$ is a model of $\mathbf{ZF}$ in which every metrizable space is sequential, then $\mathbf{CMC}$ holds in $\mathcal{M}$.
\end{theorem}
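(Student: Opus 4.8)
The plan is to route everything through the machinery already assembled in this section. Recall from the discussion preceding Theorem~3.4 that it is a $\mathbf{ZF}$-theorem (the proof of Theorem 3.3.20 of \cite{E} works verbatim) that every Hausdorff sequential space is a $k$-space; since a metrizable space is Hausdorff, the hypothesis that in $\mathcal{M}$ every metrizable space is sequential immediately gives that in $\mathcal{M}$ every metrizable space is a $k$-space. By the equivalence (i)$\Leftrightarrow$(iii) of Theorem~3.4, the latter is precisely $\mathbf{CMC}$. So the proof collapses to the chain: metrizable $+$ sequential $\Rightarrow$ metrizable $+$ $k$-space $\Rightarrow$ $\mathbf{CMC}$.

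For a self-contained version I would instead argue the contrapositive, mimicking the implication (iii)$\Rightarrow$(i) of Theorem~3.4. Assume $\neg\mathbf{CMC}$, equivalently $\neg\mathbf{WCMC}$ by \cite{Ker1}, and fix a denumerable family $\mathcal{A}=\{A_n:n\in\omega\}$ of pairwise disjoint non-empty sets with no partial multiple choice set. Put $A=\bigcup_{n\in\omega}A_n$, choose $\infty\notin A$, set $Z=A\cup\{\infty\}$, and take the metric $d$ of Lemma~2.22 with its topology $\tau(d)$. By Lemma~2.22, $\infty\in\mathrm{cl}_{\tau(d)}A$, so $A$ is not $\tau(d)$-closed. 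I would then check that $A$ is sequentially closed in $(Z,\tau(d))$: a sequence in $A$ converging in $Z$ either converges to a point of $A$, or converges to $\infty$; in the latter case, writing $x_k\in A_{n(k)}$, we have $d(x_k,\infty)=2^{-n(k)}\to 0$, hence $n(k)\to\infty$, so the range $\{x_k:k\in\omega\}$ is an infinite subset of $A$ meeting each $A_n$ in a finite set, i.e. a partial multiple choice set of $\mathcal{A}$ — contradicting the choice of $\mathcal{A}$. Therefore no sequence in $A$ converges to $\infty$, so $A^{s}\subseteq A$. Thus $(Z,\tau(d))$ is a metrizable space with a sequentially closed non-closed subset, so it is not sequential, contradicting the hypothesis; hence $\mathbf{WCMC}$, and so $\mathbf{CMC}$, holds in $\mathcal{M}$.

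The only delicate point in the second route is confirming that the range of a sequence converging to $\infty$ is a bona fide partial multiple choice set — in particular that it is genuinely infinite, not merely Dedekind infinite, since we are in $\mathbf{ZF}$. This is harmless: the map $k\mapsto n(k)$ has cofinal, hence infinite, image in $\omega$, so the range of $(x_k)$ meets infinitely many $A_n$ and is therefore infinite, and $\{x_k:k\in\omega\}\cap A_n=\{x_k:n(k)=n\}$ is finite exactly because $n(k)\to\infty$. Everything else is routine. I expect the first, one-line route to be the one used, since all the required equivalences are already in place via Theorem~3.4, but the witness from Lemma~2.22 is the concrete reason the implication holds.
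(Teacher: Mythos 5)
Your proposal is correct, and your ``self-contained'' second route is essentially the paper's own proof: the paper also argues the contrapositive, taking the space $(Z,\tau(d))$ built from Lemma 2.22 over a family $\{A_n:n\in\omega\}$ with no partial multiple choice set, and simply asserts that $A$ is sequentially closed but not closed in $(Z,\tau(d))$; the verification you supply (a sequence of $A$ converging to $\infty$ would have range meeting each $A_n$ finitely and meeting infinitely many $A_n$, hence would be a partial multiple choice set) is exactly the omitted detail, and your care about ``infinite'' versus ``Dedekind infinite'' is well placed and unproblematic for the reason you give. Your first, one-line route --- every metrizable space sequential $\Rightarrow$ every metrizable space a $k$-space (the $\mathbf{ZF}$ part of Theorem 3.3.20 of \cite{E}) $\Rightarrow$ $\mathbf{CMC}$ by the implication (iii)$\Rightarrow$(i) of the preceding theorem --- is also valid and is arguably the cleaner deduction once that theorem is in place; it buys brevity, while the paper's (and your second) argument makes visible the concrete witness, namely that the failure of $\mathbf{WCMC}$ yields a metrizable space with a sequentially closed, non-closed subset, which is slightly stronger information than the mere failure of the $k$-space property.
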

\begin{proof} Suppose $(Z, \tau(d))$ is the space from Lemma 2.22 and the proof to Theorem 3.3 where $\mathcal{A}=\{A_n : n\in\omega\}$ is a collection of pairwise disjoint non-empty sets without a partial multiple choice set. Then the set $A$ is sequentially closed but not closed in $(Z, \tau(d))$.
\end{proof}

\begin{remark} Let us notice that since $\mathbf{CMC}$ implies $\mathbf{CC}(\mathbb{R})$, it follows directly  from  Exercise E.3 to Section 4.6  of \cite{Her} that in every  model of $\mathbf{ZF}$ in which $\mathbf{CMC}$ holds, every second countable $T_ 0$-space (in particular, every second countable metrizable space) is Fr\'echet-Urysohn, so sequential.
\end{remark}

\begin{theorem} $\mathbb{R}$ is a $k$-space in every model of $\mathbf{ZF}$.
\end{theorem}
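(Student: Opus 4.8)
The plan is to deduce this from the characterization of $k$-spaces given in Theorem 3.1 together with the description of $\tau^{\star}$-compact sets in Theorem 2.28, exploiting the fact that $\mathbb{R}$ carries an explicit exhaustion by compact sets. Concretely, by Theorem 3.1 it suffices to prove that every $\tau_{nat}^{\star}$-compact subset of $\mathbb{R}$ is $\tau_{nat}$-closed, and by Theorem 2.28 a set $A\subseteq\mathbb{R}$ is $\tau_{nat}^{\star}$-compact precisely when $A\cap K$ is $\tau_{nat}$-closed for every $\tau_{nat}$-compact set $K$. Hence everything reduces to the following $\mathbf{ZF}$-statement: if $A\subseteq\mathbb{R}$ is such that $A\cap K$ is closed in $\mathbb{R}$ for every compact $K\subseteq\mathbb{R}$, then $A$ is closed in $\mathbb{R}$.

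To prove this, I would use that for each $n\in\omega$ the interval $[-n,n]$ is compact in $\mathbb{R}$ --- a theorem of $\mathbf{ZF}$, since the Heine--Borel property of closed bounded intervals uses no form of choice --- so by hypothesis $A\cap[-n,n]$ is closed in $\mathbb{R}$. Now take any $x\in\mathrm{cl}_{\tau_{nat}}(A)$ and pick $n\in\omega$ with $\vert x\vert<n$, so that $x\in(-n,n)$. Given any $\tau_{nat}$-open neighbourhood $U$ of $x$, the set $U\cap(-n,n)$ is again an open neighbourhood of $x$, hence it meets $A$; therefore $U$ meets $A\cap(-n,n)$. This shows $x\in\mathrm{cl}_{\tau_{nat}}(A\cap(-n,n))\subseteq\mathrm{cl}_{\tau_{nat}}(A\cap[-n,n])=A\cap[-n,n]\subseteq A$. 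Thus $A$ is closed, and since no choice principle was invoked at any stage, $\mathbb{R}$ is a $k$-space in every model of $\mathbf{ZF}$.

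The point I want to stress --- and essentially the only place where care is needed --- is that the naive proof via sequences (take a sequence in $A$ converging to $x$, enclose it together with its limit in a compact set) would require $\mathbf{CC}(\mathbb{R})$, or at least enough choice to pass to a convergent sequence, which is exactly the kind of hypothesis that Theorem 3.3 shows cannot be avoided for general first countable or metrizable spaces. The resolution is that $\mathbb{R}$ does not need sequences here at all: it already comes equipped with a canonical countable cofinal family of compact sets, namely $\{[-n,n]:n\in\omega\}$, with every point lying in the interior of one of its members, and this purely ``local'' observation makes the argument choice-free. So the main obstacle is really only to resist arguing sequentially; once one reduces via Theorems 3.1 and 2.28 to the behaviour of $A$ against the fixed family $\{[-n,n]\}$, the verification is routine and carried out entirely in $\mathbf{ZF}$.
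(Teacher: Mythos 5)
Your proof is correct, but it takes a genuinely different route from the paper's. You argue via local compactness: every point of $\mathbb{R}$ lies in the interior of some member of the explicit compact family $\{[-n,n]:n\in\omega\}$, and then a purely topological closure argument (if $x\in\mathrm{cl}(A)$ and $x\in(-n,n)$, then $x\in\mathrm{cl}(A\cap[-n,n])=A\cap[-n,n]$) finishes the job without any sequences; this in fact proves the more general $\mathbf{ZF}$ statement that every locally compact Hausdorff space is a $k$-space. The paper instead runs the sequential argument, but makes it choice-free by a definable selection: assuming $x\in\mathrm{cl}(A)\setminus A$, it sets $K_n=A\cap[x-2^{-n},x+2^{-n}]$ (non-empty, compact, hence closed) and takes $x_n=\inf(K_n)\in K_n$, obtaining an explicitly defined sequence in $A$ converging to $x$; the compact set $K=\{x\}\cup\{x_n:n\in\omega\}$ then violates the hypothesis that $A\cap K$ is closed. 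So your closing remark that a sequential proof ``would require $\mathbf{CC}(\mathbb{R})$'' is a little too strong: on $\mathbb{R}$ the selection can be made canonical via infima, which is exactly the paper's device; what is true is that such definable selections are unavailable in the general metrizable setting of Theorem 3.3. Two further minor points: your preliminary reduction through Theorems 3.1 and 2.28 just reassembles the definition of a $k$-space (one implication of which is trivial since compact subsets of a Hausdorff space are closed), so it could be dropped; and your appeal to the Heine--Borel theorem in $\mathbf{ZF}$ is legitimate, as its standard supremum proof uses no choice. Each approach has its merits: yours is shorter and yields the locally-compact generalization; the paper's isolates the ``definable sequence'' trick that motivates the rest of Section 3.
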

\begin{proof} Let $A$ be a  subset  of $\mathbb{R}$ such that $A\cap K$ is closed in $\mathbb{R}$ for each compact set $K$ in $\mathbb{R}$. Suppose that  $x\in (\text{cl}_{\mathbb{R}}A)\setminus A$. Let $K_n=A\cap [x-\frac{1}{2^n}, x+\frac{1}{2^n}]$ for each $n\in\omega$. The sets $K_n$ are all non-empty and compact in $\mathbb{R}$. We put $x_n=\inf( K_n)$ for each $n\in\omega$. It follows from the compactness of $K_n$ that $x_n\in K_n$ for each $n\in\omega$. In this way, we define a sequence $(x_n)_{n\in\omega}$ of points of $A$ which converges in $\mathbb{R}$ to $x$. The set $K=\{x\}\cup\{ x_n: n\in\omega\}$ is compact in $\mathbb{R}$ but $A\cap K$ is not closed in $\mathbb{R}$ which is a contradiction. Hence, $A$ must be closed in $\mathbb{R}$. This implies that $\mathbb{R}$ is a $k$-space in $\mathbf{ZF}$.
\end{proof}

\begin{proposition} (i) It is consistent with $\mathbf{ZF}$ that a subspace of $\mathbb{R}$ can fail to be a $k$-space.\\
(ii) It is consistent with $\mathbf{ZF}$ that all subspaces of $\mathbb{R}$ are $k$-spaces.
\end{proposition}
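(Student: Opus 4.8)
The plan is to settle (ii) in a line and to build an explicit witness for (i).

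\emph{Part (ii).} It suffices to produce one model of $\mathbf{ZF}$ in which all subspaces of $\mathbb{R}$ are $k$-spaces, and any model of $\mathbf{ZFC}$ works, such a model existing whenever $\mathbf{ZF}$ is consistent (pass to $L$). In such a model $\mathbf{CMC}$ holds, every subspace of $\mathbb{R}$ is metrizable, and Theorem 3.3 says that under $\mathbf{CMC}$ every metrizable space is a $k$-space; hence every subspace of $\mathbb{R}$ is a $k$-space.

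\emph{Part (i).} The idea is that a Dedekind-finite set of reals is too small to be caught by compact sets. First I would record the following $\mathbf{ZF}$-lemma: \emph{every infinite compact subspace $K$ of $\mathbb{R}$ is Dedekind-infinite.} The proof is choice-free: a bisection argument (at each stage canonically keep, say, the left half that still meets $K$ in an infinite set) yields an accumulation point $p\in K$ of $K$; assuming $p$ is a right-sided accumulation point (the left-sided case is symmetric), put $x_0=\sup(K\cap(p,p+1])$ and recursively $x_{n+1}=\sup\bigl(K\cap(p,\tfrac{p+x_n}{2}]\bigr)$; each supremum lies in $K$ because $K$ is closed in $\mathbb{R}$, and $(x_n)_{n\in\omega}$ is strictly decreasing with limit $p$, so $\{x_n:n\in\omega\}$ is a denumerable subset of $K$. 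Consequently, in $\mathbf{ZF}$, every compact subspace of a Dedekind-finite subset of $\mathbb{R}$ is finite.

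Then I would work in a model of $\mathbf{ZF}$ in which $\mathbb{R}$ has an infinite Dedekind-finite subset $D$ --- for instance Cohen's original model $\mathcal{M}1$, already used in the proof of Corollary 2.7 (it even contains an uncountable such set). Writing $\tau_D=\tau_{nat}\vert D$, I first observe that $(D,\tau_D)$ is not discrete: a discrete subspace of the second countable space $\mathbb{R}$ is countable in $\mathbf{ZF}$ (assign to each of its points, choice-freely, the least-indexed member of a fixed countable base of $\mathbb{R}$ that isolates it), and a countable infinite set is denumerable, contradicting the Dedekind-finiteness of $D$. So I can fix a point $q\in D$ that is not isolated in $(D,\tau_D)$ and set $B=D\setminus\{q\}$. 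By the lemma every $\tau_D$-compact $K\subseteq D$ is finite, so $B\cap K$ is finite and hence $\tau_D$-closed; by Theorem 2.28 the set $B$ is $\tau_D^{\star}$-compact, while $B$ is not $\tau_D$-closed because $q\in\text{cl}_{\tau_D}B\setminus B$. By the characterization in Theorem 3.1, $(D,\tau_D)$ is not a $k$-space, which proves (i). (One may also argue directly, bypassing Theorems 2.28 and 3.1: $B$ is not closed in $D$, yet $B\cap K$ is finite, hence closed in the $T_1$ space $D$, for every compact $K\subseteq D$.)

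The main obstacle is the $\mathbf{ZF}$-lemma above: extracting a genuine denumerable subset of an infinite compact subset of $\mathbb{R}$ with no appeal to any choice principle. The monotone-supremum recursion is exactly what makes this go through in $\mathbf{ZF}$; everything else is routine bookkeeping with subspace topologies and the definitions of compactness and of a $k$-space.
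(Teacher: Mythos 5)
Your argument is correct. For part (i) you follow essentially the same route as the paper: in a model with an infinite Dedekind-finite set $D\subseteq\mathbb{R}$ (e.g.\ $\mathcal{M}1$), every compact subspace of $D$ is finite, while $D$ is not discrete, so a set of the form $D\setminus\{q\}$ with $q$ non-isolated witnesses the failure of the $k$-space property; the only difference is that you supply full \textbf{ZF}-proofs of the two facts the paper merely asserts, namely that an infinite compact subset of $\mathbb{R}$ is Dedekind-infinite (your canonical bisection-plus-supremum recursion is choice-free and works) and that $D$ cannot be discrete (injecting a discrete subspace into a fixed countable base). For part (ii) you take a genuinely different route: you pass to a model of $\mathbf{ZFC}$ (e.g.\ $L$) and invoke the paper's own Theorem 3.3 ($\mathbf{CMC}$ implies every metrizable space is a $k$-space) applied to subspaces of $\mathbb{R}$, which are metrizable in \textbf{ZF}. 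The paper instead works in a model of $\mathbf{ZF}+\mathbf{CC}(\mathbb{R})$ (such as $\mathcal{M}2$) and uses Theorem 4.54 of Herrlich to conclude that every subspace of $\mathbb{R}$ is sequential, hence a $k$-space. Your version is shorter and self-contained within the paper; the paper's version is more informative, since it shows the conclusion already follows from the weak choice principle $\mathbf{CC}(\mathbb{R})$, i.e.\ in models where full $\mathbf{AC}$ (and even $\mathbf{CMC}$'s full strength) need not hold — though for the bare consistency statement both are equally adequate.
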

\begin{proof} (i) Suppose that $X$ is an infinite Dedekind finite subset of $\mathbb{R}$. Since $X$ as a subspace of $\mathbb{R}$  is not discrete, there exists a set $A\subseteq X$ such that $A$ is not closed in $X$. Let $K$ be a compact subset of $X$. Then $K$ is compact in $\mathbb{R}$, so, if $K$ were infinite, then $K$ would be Dedekind infinite. Since $K$ is Dedekind finite, we deduce that $K$ is finite. This implies  $A\cap K$ is closed in $X$ because $A\cap K$ is finite. To complete the proof to (i), it  suffices to notice that in the model $\mathcal{M}1$ of \cite{HR} there is an infinite Dedekind finite subset of $\mathbb{R}$.

(ii) Let $\mathcal{M}$ be a model of $\mathbf{ZF}$ in which $\mathbf{CC}(\mathbb{R})$ holds. For instance, the model $\mathcal{M}2$ of \cite{HR} can be taken as $\mathcal{M}$. Since, by Theorem 4.54 of \cite{Her}, it is true in $\mathcal{M}$ that every subspace of $\mathbb{R}$  is sequential, we infer that, in $\mathcal{M}$, every subspace of $\mathbb{R}$ is a $k$-space.
\end{proof}

\begin{corollary} It is independent of $\mathbf{ZF}$ that all subspaces of $\mathbb{R}$ are $k$-spaces.
\end{corollary}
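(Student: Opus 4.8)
The plan is to obtain this corollary directly from Proposition 3.8, which has already assembled both halves of an independence statement. Recall that a sentence $\varphi$ is \emph{independent of} $\mathbf{ZF}$ precisely when, granting that $\mathbf{ZF}$ is consistent, neither $\varphi$ nor its negation is a theorem of $\mathbf{ZF}$; equivalently, there is a model of $\mathbf{ZF}$ in which $\varphi$ holds and a model of $\mathbf{ZF}$ in which $\varphi$ fails. Here $\varphi$ is the sentence ``every subspace of $\mathbb{R}$ is a $k$-space''.

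First I would invoke Proposition 3.8(ii): in a model of $\mathbf{ZF}$ in which $\mathbf{CC}(\mathbb{R})$ holds — for instance the model $\mathcal{M}2$ of \cite{HR} — every subspace of $\mathbb{R}$ is sequential, hence (by the $\mathbf{ZF}$-version of the implication ``Hausdorff sequential $\Rightarrow$ $k$-space'') a $k$-space. Thus $\varphi$ holds in that model, so $\neg\varphi$ is not provable in $\mathbf{ZF}$. Next I would invoke Proposition 3.8(i): in Cohen's original model $\mathcal{M}1$ of \cite{HR} there is an infinite Dedekind finite subset $X\subseteq\mathbb{R}$, and the argument given there produces a non-closed $A\subseteq X$ all of whose intersections with compact subsets of $X$ are finite, hence closed; so $X$, as a subspace of $\mathbb{R}$, is not a $k$-space. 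Thus $\neg\varphi$ holds in that model, so $\varphi$ is not provable in $\mathbf{ZF}$.

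Combining the two observations yields that $\varphi$ is independent of $\mathbf{ZF}$, which is the assertion of the corollary. There is essentially no obstacle here beyond the two consistency results already established in Proposition 3.8; the only point worth flagging explicitly is the standard caveat that ``independent of $\mathbf{ZF}$'' is to be read relative to the consistency of $\mathbf{ZF}$, since both witnessing models ($\mathcal{M}1$ and $\mathcal{M}2$) are produced under that hypothesis.
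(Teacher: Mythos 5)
Your proposal is correct and matches the paper's intent exactly: the corollary is stated as an immediate consequence of Proposition 3.8, with part (ii) (via $\mathbf{CC}(\mathbb{R})$ in $\mathcal{M}2$) showing the sentence is consistent and part (i) (via an infinite Dedekind finite subset of $\mathbb{R}$ in $\mathcal{M}1$) showing its negation is consistent. Your explicit caveat about relative consistency is a sensible clarification but introduces nothing beyond what the paper already relies on.
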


In what follows, as a metric space, $\mathbb{R}$ is considered with the metric $\rho$ defined by $\rho(x, y)=\vert x-y\vert$ for all $x, y\in\mathbb{R}$.

Using the notation from Theorem 4.55 of \cite{Her}, we denote by $\mathbf{CC}(c\mathbb{R})$ the following sentence: Every non-empty countable collection of non-empty complete subspaces of $\mathbb{R}$ has a choice function.

\begin{theorem}  (i) If the Sorgenfrey line is a $k$-space, then $\mathbf{CC}(c\mathbb{R})$ holds.\\
(ii) If $\mathbf{CC}(\mathbb{R})$ holds, then the Sorgenfrey line is a $k$-space.
\end{theorem}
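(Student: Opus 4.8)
For part (i), I would argue contrapositively. Suppose $\mathbf{CC}(c\mathbb{R})$ fails, so there is a countable family $\{C_n : n\in\omega\}$ of non-empty complete subspaces of $\mathbb{R}$ (with the metric $\rho$) admitting no choice function. I want to manufacture from this data a set $A\subseteq\mathbb{R}$ that is not $\tau_{\mathbb{S}}$-closed but meets every $\tau_{\mathbb{S}}$-compact set in a $\tau_{\mathbb{S}}$-closed set, thereby witnessing that $\mathbb{S}$ is not a $k$-space via the definition in Section~3. The natural device: translate the $C_n$ into pairwise disjoint copies sitting inside disjoint bounded intervals, say $C_n'\subseteq (n, n+1)$ via an order-preserving affine map, and let $A = \bigcup_{n\in\omega} C_n'$. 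Then $A$ is not $\tau_{\mathbb{S}}$-closed provided some $C_n$ is non-trivial (if all $C_n$ are singletons a choice function exists trivially, so we may assume otherwise, or more carefully arrange the copies to accumulate). The key point is the interaction with compact sets of $\mathbb{S}$: a $\tau_{\mathbb{S}}$-compact $K$ is countable, $\tau_{nat}$-closed, bounded, and has no left accumulation points, hence is $\tau_{nat}$-well-ordered and in particular $K\cap (n,n+1)$ is, for each $n$, a set whose $\tau_{nat}$-closure adds at most countably many limit points — and those limits, being infima of subsets, are again selectable without choice. The heart of the matter is to show $A\cap K$ is $\tau_{\mathbb{S}}$-closed; if it failed, there would be a point $x\in\mathbb{R}$ that is a $\tau_{\mathbb{S}}$-limit of $A\cap K$ with $x\notin A\cap K$, meaning $x$ is a left limit from inside $A\cap K$, which forces $x$ to lie in the $\tau_{nat}$-closure of $C_n'$ for the relevant $n$; since $C_n'$ is complete, $x\in C_n'\subseteq A$, and one argues $x\in K$ as well (as $K$ is $\tau_{nat}$-closed), a contradiction. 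Iterating this reasoning across all $n$ does not require choice because $x$ is uniquely determined by $K$ and $A$.

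For part (ii), assume $\mathbf{CC}(\mathbb{R})$. By Remark~2.14 and the surrounding discussion, $\mathbf{CC}(\mathbb{R})$ is exactly strong enough to make the standard $\mathbf{ZFC}$ argument go through: I would show directly that $\mathbb{S}$ is a $k$-space by showing every first-countable-style sequence-extraction works. Concretely, let $A\subseteq\mathbb{R}$ be such that $A\cap K$ is $\tau_{\mathbb{S}}$-closed for every $\tau_{\mathbb{S}}$-compact $K$, and suppose $x\in\mathrm{cl}_{\tau_{\mathbb{S}}}(A)\setminus A$. Then for each $n\in\omega$ the set $A\cap [x, x+\tfrac{1}{2^n})$ is non-empty; using $\mathbf{CC}(\mathbb{R})$ choose $y_n$ in it, and then (using that in $\mathbb{S}$ a point has a countable neighbourhood base) extract a sequence decreasing to $x$ from the right. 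The set $K=\{x\}\cup\{y_n : n\in\omega\}$, after thinning to a strictly decreasing subsequence, is $\tau_{\mathbb{S}}$-compact — a decreasing sequence together with its $\tau_{nat}$-infimum is compact in $\mathbb{S}$ because every half-open cover of such a set reduces to a finite subcover. But $A\cap K$ omits $x$ while $x$ is a $\tau_{\mathbb{S}}$-limit of $A\cap K$, contradicting $\tau_{\mathbb{S}}$-closedness. Hence $A$ is $\tau_{\mathbb{S}}$-closed and $\mathbb{S}$ is a $k$-space.

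The main obstacle, I expect, is the bookkeeping in part (i): verifying in $\mathbf{ZF}$ — without inadvertently using choice — that $A\cap K$ is $\tau_{\mathbb{S}}$-closed for \emph{every} compact $K$, given only that the $C_n$ are complete and not simultaneously choosable. The delicate point is that completeness of a single $C_n$ is a $\mathbf{ZF}$-statement about that one space, and the witness $x$ to a potential failure of closedness is canonically determined (it is an infimum), so no choice is smuggled in; the family-level failure of $\mathbf{CC}(c\mathbb{R})$ is used \emph{only} to conclude that $A$ itself is not $\tau_{\mathbb{S}}$-closed (otherwise $A$ would be a complete subspace of $\mathbb{R}$ from which one could read off a choice function on $\{C_n\}$). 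Getting this division of labour exactly right — local $\mathbf{ZF}$ arguments for the compact slices, the global anti-choice hypothesis only for non-closedness of $A$ — is the crux.
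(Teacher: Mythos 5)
Your part (ii) is correct and is essentially the paper's argument: $\mathbf{CC}(\mathbb{R})$ lets you pick $y_n\in A\cap[x,x+2^{-n})$, the set $K=\{x\}\cup\{y_n:n\in\omega\}$ is $\tau_{\mathbb{S}}$-compact (no thinning is even needed: the basic set covering $x$ absorbs all but finitely many $y_n$), and $A\cap K$ is then not $\tau_{\mathbb{S}}$-closed, a contradiction; the paper reaches the same sequence via the fact that $\mathbf{CC}(\mathbb{R})$ makes $\mathbb{R}$ Fr\'echet. Part (i), however, has a genuine gap at its central construction. You cannot move an arbitrary non-empty complete subspace $C_n\subseteq\mathbb{R}$ into $(n,n+1)$ by an order-preserving \emph{affine} map: if $C_n$ is unbounded, no affine image of it fits in a bounded interval. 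The only available compressions are non-uniform homeomorphisms (of arctangent type), and these destroy completeness: the copy $C_n'$ is then merely closed in the open interval $(n,n+1)$ and may accumulate at the endpoint $n$ from the right. That kills the step carrying all the weight, namely ``since $C_n'$ is complete, $x\in C_n'$'', and in fact the asserted conclusion can fail outright: if, say, $-\mathbb{N}\subseteq C_0$, the compressed copy contains an explicit sequence strictly decreasing to $0$ (the left endpoint of its interval), so $K=\{0\}\cup\{\text{those points}\}$ is $\tau_{\mathbb{S}}$-compact while $A\cap K$ has the right-sided limit point $0\notin A$; thus $A\cap K$ is not $\tau_{\mathbb{S}}$-closed and $A$ does not witness failure of the $k$-property. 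A secondary confusion: the reason $A$ is not $\tau_{\mathbb{S}}$-closed is not that ``$A$ would be complete''; the correct (and easily fixed) argument is that if $A$ were $\tau_{\mathbb{S}}$-closed, then each $\inf C_n'$, being a right-sided limit of $C_n'$, would lie in $A\cap[n,n+1]=C_n'$, so $n\mapsto\min C_n'$ would be a choice function.

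For comparison, the paper sidesteps all family-level bookkeeping: by Theorem 4.55 of Herrlich, $\neg\mathbf{CC}(c\mathbb{R})$ gives a single sequentially closed, non-closed $A\subseteq\mathbb{R}$; reflecting about a closure point $a\notin A$ produces $B=[(A-a)\cup(-A+a)]\cap(0,+\infty)$, which is sequentially closed in $\mathbb{R}$ yet has $0\in\mathrm{cl}_{\mathbb{S}}B\setminus B$; and for every $\tau_{\mathbb{S}}$-compact $K$ the set $K\cap B$ is countable and sequentially closed, hence closed in $\mathbb{R}$ (from a countable set one extracts a convergent sequence canonically, without choice), hence closed in $\mathbb{S}$. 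If you want to salvage your approach, the cleanest repair is exactly this single-set form: from $\neg\mathbf{CC}(c\mathbb{R})$ obtain one complete non-closed subspace of $\mathbb{R}$ (otherwise closedness yields a canonical choice function, e.g.\ via nearest points to $0$), and run the reflection argument; your idea of exploiting countability of Sorgenfrey-compact sets plus completeness then works verbatim, because no completeness-destroying re-embedding is needed.
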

\begin{proof} (i) Suppose that $\mathbf{CC}(c\mathbb{R})$ does not hold. Then, by Theorem 4.55 of \cite{Her}, $\mathbb{R}$ is not sequential. Let $A$ be a sequentially closed subset  of $\mathbb{R}$ which is not closed in $\mathbb{R}$. Let $a\in(\text{cl}_{\mathbb{R}}A)\setminus A$. The set $B=[(A-a)\cup (-A+a)]\cap (0, +\infty)$ is sequentially closed in $\mathbb{R}$ and not closed in $\mathbb{R}$. Since $0\in(\text{cl}_{\mathbb{S}}B)\setminus B$, the set $B$ is not closed in $\mathbb{S}$. Let $K$ be a compact set in $\mathbb{S}$. Then $K$ is countable and compact in $\mathbb{R}$. The set $K\cap B$ is sequentially closed in $\mathbb{R}$ and since, in addition, $K\cap B$ is countable, we deduce that $K\cap B$ is closed in $\mathbb{R}$. This implies $K\cap B$ is closed in $\mathbb{S}$. Therefore, $\mathbb{S}$ is not a $k$-space.

(ii) Now, suppose that $\mathbf{CC}(\mathbb{R})$ holds. Let $F\subseteq \mathbb{R}$ be not closed in $\mathbb{S}$ and let $x\in\text{cl}_{\mathbb{S}}F\setminus F$. Then $G=F\cap (x, +\infty)$ is not closed in  $\mathbb{R}$ and $x\in(\text{cl}_{\mathbb{R}}G)\setminus G$. In the light of Theorem 4.54 of \cite{Her}, $\mathbb{R}$ is Fr\'echet. This implies that there exists a sequence $(x_n)_{n\in\omega}$ of points of $G$ which converges in $\mathbb{R}$ to $x$. The set $K=\{x\}\cup\{x_n: n\in\omega\}$ is compact in $\mathbb{S}$ but $K\cap F$ is not closed in $\mathbb{S}$. This proves that $\mathbb{S}$ is a $k$-space.
\end{proof}

\begin{corollary} It  is consistent with $\mathbf{ZF}$ that the Sorgenfrey line is not a $k$-space.
\end{corollary}

From Theorems 2.28  and 3.10, we immediately obtain the following:

\begin{corollary} Let $\tau$ be the topology of the Sorgenfrey line. If every compact in $(\mathbb{R}, \tau^{\star})$ set is closed in $(\mathbb{R}, \tau)$, then $\mathbf{CC}(c\mathbb{R})$ holds.
\end{corollary}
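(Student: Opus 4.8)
The plan is to recognize that the hypothesis is merely a restatement of the assertion that the Sorgenfrey line $(\mathbb{R},\tau)=\mathbb{S}$ is a $k$-space, and then to quote Theorem 3.10(i). Since $\mathbb{S}$ is a Hausdorff space whose underlying set $\mathbb{R}$ is infinite, the compact complement topology $\tau^{\star}$ is well defined and all the results of Section 2 — in particular Theorem 2.28 — apply to $\tau$.

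The one point to make explicit is that, under the standing assumption that $\mathbb{S}$ is Hausdorff, the sentence ``every $\tau^{\star}$-compact subset of $\mathbb{R}$ is $\tau$-closed'' is equivalent to ``$\mathbb{S}$ is a $k$-space''. This is exactly Theorem 3.1 applied with $\tau$ equal to the Sorgenfrey topology, and it is obtained from Theorem 2.28 as follows. Suppose the hypothesis holds and let $A\subseteq\mathbb{R}$ satisfy: $A\cap K$ is $\tau$-closed for every $\tau$-compact set $K$. By Theorem 2.28 the set $A$ is then $\tau^{\star}$-compact, hence $\tau$-closed by the hypothesis. As the reverse implication (a $\tau$-closed set meets every compact set in a $\tau$-closed set) is immediate, the definition of a $k$-space is satisfied by $\mathbb{S}$.

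Having identified the hypothesis with ``$\mathbb{S}$ is a $k$-space'', I would finish by invoking Theorem 3.10(i), which yields $\mathbf{CC}(c\mathbb{R})$. I do not expect any genuine obstacle here: the statement is a true corollary, and the only content beyond a pair of citations is the observation — already packaged in Theorem 2.28 and Theorem 3.1 — that ``$\tau^{\star}$-compact sets are $\tau$-closed'' is precisely the $k$-space condition for $\mathbb{S}$.
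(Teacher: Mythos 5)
Your proposal is correct and follows exactly the paper's intended route: the paper derives this corollary ``immediately'' from Theorems 2.28 and 3.10, i.e.\ by noting (as in Theorem 3.1) that the hypothesis says precisely that the Sorgenfrey line is a $k$-space and then applying Theorem 3.10(i). Your explicit unpacking of the relevant direction of Theorem 2.28 is a faithful elaboration of that same argument.
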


\section{Compact complement partial topology}

Let us slightly reformulate Definition 2.1 of \cite{OPSW}:

\begin{definition} A \emph{partial topology} of a set $X$ is a collection $Cov_X\subseteq\mathcal{P}(\mathcal{P}(X))$ which satisfies the following conditions:
\begin{enumerate}
\item[(i)] $\tau_X=\bigcup Cov_X$ is a topology on $X$;
\item[(ii)] if $\mathcal{U}\subseteq \tau_X$ and $\mathcal{U}$ is finite, then $\mathcal{U}\in Cov_X$;
\item[(iii)] if $\mathcal{U}\in Cov_X$ and $V\in\tau_X$, then $\{U\cap V: U\in\mathcal{U}\}\in Cov_X$;
\item[(iv)] if $\mathcal{U}\in Cov_X$ and, for each $U\in\mathcal{U}$, we have $\mathcal{V}(U)\in Cov_X$ such that $U=\bigcup\mathcal{V}(U)$, then $\bigcup_{U\in\mathcal{U}}\mathcal{V}(U)\in Cov_X$;
\item[(v)] if $\mathcal{U}\subseteq \tau_X$ and $\mathcal{V}\in Cov_X$ are such that $\bigcup\mathcal{U}=\bigcup\mathcal{V}$ and, for each $V\in\mathcal{V}$, there exists $U\in\mathcal{U}$ such that $V\subseteq U$, then $\mathcal{U}\in Cov_X$.
\end{enumerate}
\end{definition}

\begin{definition} If  $Cov_X$ is a partial topology on a set $X$, then the ordered pair $(X, Cov_X)$ is called a \emph{partially topological space}, while $\tau_X=\bigcup Cov_X$ is called the \emph{topology corresponding to} $Cov_X$. 
\end{definition}

 \begin{remark} Let us notice that if $(X, Cov_X)$ is a partially topological space, then the triple $(X,\bigcup Cov_X, Cov_X)$ is a Delfs-Knebusch generalized topological space (in abbreviation a D-K gts) in the sense of Definition 2.2.1 of \cite{Pie1} and, moreover, this D-K gts is partially topological in the sense of Definition 2.2.67 of \cite{Pie1}.  Delfs-Knebusch gtses were studied, for instance, in \cite{DK,K,OPSW,Pie1,Pie2,Pie3,PW, PW1}. We recall that, according to Remark 2.2.3 of \cite{Pie1}, a D-K gts is an ordered pair $(X, Cov_X)$ such that, for $Op_X=\bigcup Cov_X$, the triple $(X,Op_X, Cov_X)$ satisfies the conditions of Definition 2.2.2 of \cite{Pie1}. In general, $Op_X$ need not be a topology on $X$. If $(X, Cov_X)$ is a D-K gts, then $Cov_X$ is called a D-K (Delfs-Knebusch) generalized topology on $X$. 
\end{remark}

If $\psi$ is a topological property, then we say that a partially topological space $(X, Cov_X)$ has $\psi$ if the topological space $(X, \bigcup Cov_X)$ has $\psi$. In particular:

\begin{definition} We say that a partially topological space $(X. Cov_X)$ is:
\begin{enumerate}
\item[(i)] \emph{Hausdorff} if $(X, \bigcup Cov_X)$ is Hausdorff;
\item[(ii)]\emph{compact} if $(X, \bigcup Cov_X)$  is compact.
\end{enumerate}
\end{definition}

\begin{definition}
Let  $(X, Cov_X)$ be a Hausdorff partially topological space,  $\tau_X$  the topology corresponding to $Cov_X$ and $\tau_X^{\star}$ the compact complement  topology of $(X, \tau_X)$. Then the collection
$$ Cov_X^{\star}=Cov_X \cap \mathcal{P}(\tau_X^{\star})$$
will be called the \emph{compact complement partial topology} of $(X, Cov_X)$.
\end{definition}

\begin{remark}
Let  $(X, Cov_X)$ be a Hausdorff partially topological space.  That $Cov_X^{\star}$ is a D-K generalized topology follows from Fact 2.2.31 in \cite{Pie1} which says that  the intersection of any non-empty family of D-K generalized topologies on $X$ is a D-K generalized topology on $X$. Since $\bigcup (Cov_X \cap \mathcal{P}(\tau_X^{\star}))=\tau_X^{\star}$, the D-K generalized topology  $Cov_X^{\star}$ is a partial topology on $X$.
\end{remark}

In what follows, we use the  symbols $\cap_1, \setminus_1$ introduced on page 219 of \cite{Pie1}. We recall that, for  collections $\mathcal{U}, \mathcal{V}$ of subsets of $X$, we have $\mathcal{U}\cap_1\mathcal{V}=\{ U\cap V: U\in\mathcal{U}, V\in\mathcal{V}\}$ and, analogously,  $\mathcal{U}\setminus_1\mathcal{V}=\{U\setminus V: U\in\mathcal{U}, V\in\mathcal{V}\}$. Moreover, for a collection $\mathcal{A}\subseteq\mathcal{P}(\mathcal{P}(X))$, we denote by $\langle \mathcal{A}\rangle_X$ the intersection of all D-K generalized topologies on $X$ that contain $\mathcal{A}$ (see page 242 of \cite{PW}).

\begin{definition}
 For a subset $Y$ of $X$ and a partial topology $Cov$ on $X$, let
$$Cov\vert Y=\langle \{\mathcal{V}\cap_1 \{ Y\} : \mathcal{V}\in  Cov\} \rangle_Y.$$
Then $Cov\vert Y$ is called the \emph{partial topology on $Y$ induced by} $Cov$ and
$(Y, Cov\vert Y)$ is called  a partially topological \emph{subspace} of
$(X, Cov)$.
\end{definition}

The following  theorem is an adaptation of Theorem 2.2 to partial topologies:

\begin{theorem} Let $(X, Cov_X)$ be a Hausdorff partially topological space, $\tau_X=\bigcup Cov_X$ and
 $Y\subseteq X$. The following conditions are fuilfilled:
\begin{enumerate}
\item[(i)] $Cov_X^{\star}\vert Y\subseteq Cov_X\vert Y$;
\item[(ii)] if $Y$ is compact in $(X, \tau_X)$, then $Cov_X^{\star}\vert Y=Cov_X\vert Y$;
\item[(iii)] $Cov_X^{\star}\vert Y=Cov_X\vert Y$ if and only if there exists a $\tau_X$-compact set $K$ such that $Y\subseteq K$.
\end{enumerate}
\end{theorem}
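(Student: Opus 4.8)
The plan is to mimic the proof of Theorem~2.2, replacing ``open set'' by ``admissible cover'' throughout. Write $\tau_X=\bigcup Cov_X$, let $\tau_X^{\star}$ be the compact complement topology of $(X,\tau_X)$, and recall that $\bigcup Cov_X^{\star}=\tau_X^{\star}$ and $Cov_X^{\star}\subseteq Cov_X$. Item~(i) is then immediate: since $Cov_X^{\star}\subseteq Cov_X$ we have $\{\mathcal{V}\cap_1\{Y\}:\mathcal{V}\in Cov_X^{\star}\}\subseteq\{\mathcal{V}\cap_1\{Y\}:\mathcal{V}\in Cov_X\}$, and $\langle\cdot\rangle_Y$ is order-preserving, so $Cov_X^{\star}\vert Y\subseteq Cov_X\vert Y$.

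For~(ii) --- and, by the same computation, for the sufficiency in~(iii) --- I would assume that $Y\subseteq K$ for some $\tau_X$-compact set $K$ (taking $K=Y$ for~(ii)) and prove the reverse inclusion $Cov_X\vert Y\subseteq Cov_X^{\star}\vert Y$. Since $Cov_X^{\star}\vert Y$ is a D-K generalized topology, it is enough to check that the generating family $\{\mathcal{V}\cap_1\{Y\}:\mathcal{V}\in Cov_X\}$ of $Cov_X\vert Y$ is contained in $Cov_X^{\star}\vert Y$, that is, that $\mathcal{V}\cap_1\{Y\}\in Cov_X^{\star}\vert Y$ for every $\mathcal{V}\in Cov_X$. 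Fix such a $\mathcal{V}$, the case $\mathcal{V}=\emptyset$ being trivial, put $W=\bigcup\mathcal{V}\in\tau_X$, $O=X\setminus K\in\tau_X^{\star}\subseteq\tau_X$, and set
\[\mathcal{W}=\{O\cup V:V\in\mathcal{V}\}.\]
The crucial claim is that $\mathcal{W}\in Cov_X^{\star}$. Indeed, $\{W,O\}\in Cov_X$ by axiom~(ii) in the definition of a partial topology; refining the member $W$ by $\mathcal{V}$ and the member $O$ by $\{O\}$, axiom~(iv) gives $\mathcal{V}\cup\{O\}\in Cov_X$; since $\bigcup(\mathcal{V}\cup\{O\})=W\cup O=\bigcup\mathcal{W}$ and every member of $\mathcal{V}\cup\{O\}$ is contained in some member of $\mathcal{W}$, axiom~(v) then gives $\mathcal{W}\in Cov_X$; and because $O\cup V=X\setminus(K\setminus V)$ while $K\setminus V$ is a $\tau_X$-closed, hence $\tau_X$-compact, subset of $K$, each $O\cup V$ belongs to $\tau_X^{\star}$, so $\mathcal{W}\subseteq\tau_X^{\star}$ and therefore $\mathcal{W}\in Cov_X\cap\mathcal{P}(\tau_X^{\star})=Cov_X^{\star}$. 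Finally, $Y\subseteq K$ yields $(O\cup V)\cap Y=V\cap Y$, hence $\mathcal{W}\cap_1\{Y\}=\mathcal{V}\cap_1\{Y\}$, so that $\mathcal{V}\cap_1\{Y\}\in\{\mathcal{W}'\cap_1\{Y\}:\mathcal{W}'\in Cov_X^{\star}\}\subseteq Cov_X^{\star}\vert Y$. This settles~(ii) and the ``if'' direction of~(iii).

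For the necessity in~(iii), I would descend to underlying topologies. For any partial topology $Cov$ on $X$ one has $\bigcup(Cov\vert Y)=(\bigcup Cov)\vert Y$: indeed $\bigcup\{\mathcal{V}\cap_1\{Y\}:\mathcal{V}\in Cov\}=\tau_X\vert Y$ is already a topology on $Y$, and whenever $\bigcup\mathcal{A}$ is a topology one has $\langle\mathcal{A}\rangle_Y\subseteq\mathcal{P}(\bigcup\mathcal{A})$ because $\mathcal{P}(\bigcup\mathcal{A})$ is itself a partial topology containing $\mathcal{A}$, so $\bigcup\langle\mathcal{A}\rangle_Y=\bigcup\mathcal{A}$. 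Applying this to $Cov_X$ and to $Cov_X^{\star}$, the hypothesis $Cov_X^{\star}\vert Y=Cov_X\vert Y$ forces $\tau_X^{\star}\vert Y=\tau_X\vert Y$, and Theorem~2.2(iii) produces a $\tau_X$-compact $K$ with $Y\subseteq K$. The only real obstacle in all of this is the verification that $\mathcal{W}\in Cov_X^{\star}$ in part~(ii): one has to guess the correct $\tau_X^{\star}$-valued modification $\mathcal{W}$ of an arbitrary admissible cover $\mathcal{V}$ and then check, via the axioms (ii), (iv) and (v) of a partial topology, that it is again admissible. Everything else reduces to Theorem~2.2 together with routine bookkeeping.
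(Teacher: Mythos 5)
Your proof is correct, and its core device is the same as the paper's: replace each member $V$ of an admissible family by its union with the complement of the compact set, so that the trace on $Y$ is unchanged while the new members become $\tau_X^{\star}$-open (in the paper's proof of (ii) this appears, with $K=Y$, as the identity $\mathcal{V}\cap_1\{Y\}=\{Y\}\cap_1(\{X\}\setminus_1\mathcal{A})$), and reduce the necessity in (iii) to Theorem 2.2(iii). The organization differs in three worthwhile ways. First, you treat the general case $Y\subseteq K$ directly, which yields (ii) and the sufficiency in (iii) in one stroke; the paper instead proves (ii) only for compact $Y$ and then obtains sufficiency via the transitivity of induced partial topologies (Fact 10.3 of \cite{PW}), an external fact your argument does not need. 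Second, you verify explicitly, via axioms (ii), (iv) and (v) of Definition 4.1, that the modified family $\mathcal{W}=\{(X\setminus K)\cup V: V\in\mathcal{V}\}$ lies in $Cov_X$ and hence in $Cov_X^{\star}$; the paper leaves this admissibility check implicit, so your write-up actually fills in the step on which the whole reduction rests (and you correctly isolate the trivial case $\mathcal{V}=\emptyset$, needed for axiom (v)). Third, for necessity you prove the clean identity $\bigcup(Cov\vert Y)=(\bigcup Cov)\vert Y$ (valid because $\mathcal{P}$ of a topology on $Y$ is itself a partial topology containing the generating family, so the generated D-K generalized topology cannot enlarge the union) and then invoke Theorem 2.2(iii) wholesale, whereas the paper re-runs the singleton-cover argument and says ``reasoning as in the proof of Theorem 2.2(iii)''; the two reductions are equivalent, but your lemma-based version is more reusable and makes transparent exactly what is being transferred from the topological to the partially topological setting.
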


\begin{proof}  Since  $Cov_X^{\star}\subseteq Cov_X$, it is obvious that (i) is satisfied.

(ii)  Suppose that $Y$ is $\tau_X$-compact and $\mathcal{V}\in Cov_X$. Since $(X, \tau_X)$ is Hausdorff, the set $Y$ is $\tau_X$-closed, so $\mathcal{A}=\{Y\}\cap_1 (\{X\}\setminus_1 \mathcal{V})$ is a collection of  $\tau_X$-compact subsets of $Y$.
Notice that $\mathcal{V}\cap_1 \{Y\}=\{Y\}\cap_1 (\{X\}\setminus_1 \mathcal{A})$. This implies that $\mathcal{V}\cap_1 \{Y\}\in Cov_X^{\star}\vert Y$ and, in consequence,  $Cov_X\vert Y\subseteq  Cov_X^{\star}\vert Y$.

(iii) Now, assume that $K$ is a $\tau_X$-compact set such that $Y\subseteq K$. It follows from (ii) that $Cov_X\vert K=Cov_X^{\star}\vert K$. Hence, in view of Fact 10.3 of \cite{PW}, we have $Cov_X\vert Y= (Cov_X\vert K)\vert Y=(Cov_X^{\star}\vert K)\vert Y= Cov_X^{\star}\vert Y$.

Finally,  suppose that $Y$ is a subset of $X$ such that $Cov_X^{\star}\vert Y=Cov_X\vert Y$. Let $V\in \tau_X$ be such that $\emptyset\neq V \cap Y\neq Y$. Then $\{V\}\in Cov_X$.  Since $\{V\cap Y\}\in Cov_X^{\star}\vert Y$, there exists a $\tau_X$-compact  set $K_0$ such that $V \cap Y=Y\setminus {K}_0$. Reasoning as in the proof to Theorem \ref{theorem1} (iii), we get that there exists a $\tau_X$-compact set $K$ such that  $Y\subseteq K$.
\end{proof}

\begin{corollary} A Hausdorff partially topological space $(X, Cov_X)$ is compact if and only if
$Cov_X=Cov_X^{\star}$.
\end{corollary}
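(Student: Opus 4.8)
The plan is to prove Corollary 4.11 directly from Theorem 4.10, exactly as Corollary 2.3 follows from Theorem 2.2. First I would establish the direction ``$Cov_X = Cov_X^{\star}$ implies compact''. Suppose $Cov_X = Cov_X^{\star}$. Then the corresponding topologies agree, i.e., $\tau_X = \bigcup Cov_X = \bigcup Cov_X^{\star} = \tau_X^{\star}$. By Corollary 2.3 applied to the Hausdorff space $(X, \tau_X)$, the equality $\tau_X = \tau_X^{\star}$ forces $(X, \tau_X)$ to be compact, which by Definition 4.5(ii) means precisely that $(X, Cov_X)$ is compact.

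For the converse, assume $(X, Cov_X)$ is compact, so $(X, \tau_X)$ is a compact Hausdorff space. Then every $\tau_X$-closed set is $\tau_X$-compact; in particular $X$ itself is $\tau_X$-compact. Taking $Y = X$ in Theorem 4.10(iii) (or (ii)), the hypothesis ``there exists a $\tau_X$-compact set $K$ with $X \subseteq K$'' is satisfied with $K = X$, so Theorem 4.10(iii) gives $Cov_X^{\star}\vert X = Cov_X\vert X$. It remains only to observe that $Cov\vert X = Cov$ for any partial topology $Cov$ on $X$; this is a restriction-to-the-whole-space triviality for the operation of Definition 4.8 (the analogue of $\mathcal{T}\vert X = \mathcal{T}$), presumably recorded in the cited Fact~10.3 of \cite{PW} or immediate from the definition of $\langle\,\cdot\,\rangle_X$. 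Hence $Cov_X^{\star} = Cov_X$.

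Alternatively, and perhaps more cleanly, one can bypass restriction entirely: by definition $Cov_X^{\star} = Cov_X \cap \mathcal{P}(\tau_X^{\star})$, so $Cov_X^{\star} \subseteq Cov_X$ always holds. If $(X,\tau_X)$ is compact then $\tau_X^{\star} = \tau_X$ by Corollary 2.3, whence $\mathcal{P}(\tau_X^{\star}) = \mathcal{P}(\tau_X) \supseteq Cov_X$, and therefore $Cov_X = Cov_X \cap \mathcal{P}(\tau_X^{\star}) = Cov_X^{\star}$. Conversely $Cov_X = Cov_X^{\star}$ yields $\bigcup Cov_X = \bigcup Cov_X^{\star} = \tau_X^{\star}$, i.e.\ $\tau_X = \tau_X^{\star}$, so $(X,\tau_X)$ is compact by Corollary 2.3. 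I would favour this second argument as the main line, since it avoids any appeal to the induced-subspace machinery and uses only Remark 4.7 (that $\bigcup Cov_X^{\star} = \tau_X^{\star}$) together with Corollary 2.3.

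There is no serious obstacle here; the only point requiring a moment's care is the bookkeeping around the ambient set in the ``subspace'' formulation of Theorem 4.10 if one goes that route, namely verifying $Cov_X\vert X = Cov_X$. Using the second argument this issue does not arise at all. In the written proof I would simply say: ``This follows from Remark 4.7 and Corollary 2.3,'' and spell out the two short implications above.
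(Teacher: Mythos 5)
Your proposal is correct. The paper itself gives no written proof, but its intended argument is the one you sketch first: place $Y=X$ in Theorem 4.10, exactly as Corollary 2.3 sits under Theorem 2.2 — part (ii) gives $Cov_X^{\star}=Cov_X$ when $X$ is $\tau_X$-compact, and part (iii) gives a $\tau_X$-compact $K\supseteq X$, hence compactness of $X$, when $Cov_X^{\star}=Cov_X$; this route does require the small bookkeeping point you flag, namely $Cov_X\vert X=Cov_X$, which is immediate since $\mathcal{V}\cap_1\{X\}=\mathcal{V}$ and $\langle Cov_X\rangle_X=Cov_X$. Your preferred second argument is genuinely different and, as you say, cleaner: from the definition $Cov_X^{\star}=Cov_X\cap\mathcal{P}(\tau_X^{\star})$ one always has $Cov_X^{\star}\subseteq Cov_X$; compactness of $(X,\tau_X)$ gives $\tau_X^{\star}=\tau_X$ (Corollary 2.3), hence $Cov_X\subseteq\mathcal{P}(\tau_X)=\mathcal{P}(\tau_X^{\star})$ and equality; conversely $Cov_X=Cov_X^{\star}$ forces $\tau_X=\bigcup Cov_X=\bigcup Cov_X^{\star}=\tau_X^{\star}$ by Remark 4.7, and Corollary 2.3 applies again. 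What the paper's route buys is the structural parallel with Section 2 and a use of the subspace machinery of Definition 4.8; what your direct route buys is independence from that machinery (no appeal to $\cap_1$, $\langle\cdot\rangle_Y$ or Fact 10.3 of \cite{PW}), resting only on the definition of $Cov_X^{\star}$, Remark 4.7 and Corollary 2.3. Either version is a complete proof.
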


In view of Proposition 2.8 and Corollary 2.29,  the following proposition holds:

\begin{proposition} If $(X, Cov_X)$ is a Hausdorff partially topological space,  then the partially topological space  $(X, Cov_X^{\star})$ is compact and $T_1$.
\end{proposition}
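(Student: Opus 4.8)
The plan is to push the entire statement down to the corresponding topological space and then quote Section~2. First I would observe, exactly as recorded in Remark~4.8, that the topology corresponding to $Cov_X^{\star}$ is
$$\bigcup Cov_X^{\star}=\bigcup\bigl(Cov_X\cap\mathcal{P}(\tau_X^{\star})\bigr)=\tau_X^{\star},$$
the compact complement topology of $(X,\tau_X)$, where $\tau_X=\bigcup Cov_X$. Since $(X,Cov_X)$ is Hausdorff, Definition~4.6(i) tells us that the topological space $(X,\tau_X)$ is Hausdorff, so the results of Section~2 are available for it.

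Next I would invoke the convention (stated just before Definition~4.6) that a partially topological space $(X,Cov_X^{\star})$ has a topological property $\psi$ precisely when the topological space $(X,\bigcup Cov_X^{\star})$ has $\psi$. By the previous paragraph $\bigcup Cov_X^{\star}=\tau_X^{\star}$, so it suffices to show that $(X,\tau_X^{\star})$ is compact and $T_1$.

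Finally, these are exactly Corollary~2.29 (which asserts that $(X,\tau^{\star})$ is compact for every Hausdorff $(X,\tau)$) and Proposition~2.8 (which asserts that $(X,\tau^{\star})$ is $T_1$), applied with $\tau=\tau_X$. Combining these with the reductions above yields that $(X,Cov_X^{\star})$ is compact and $T_1$.

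There is no real obstacle in this argument: all the topological content lies in Corollary~2.29 and Proposition~2.8, both already proved in Section~2, and the only thing to be careful about is the bookkeeping identity $\bigcup Cov_X^{\star}=\tau_X^{\star}$ together with the definitional transfer of ``compact'' and ``$T_1$'' from the partial topology to its corresponding topology. Accordingly, I expect the write-up to be short, essentially a two-line deduction from the cited results.
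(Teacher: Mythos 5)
Your proposal is correct and matches the paper's argument exactly: the paper simply prefaces the proposition with ``In view of Proposition 2.8 and Corollary 2.29'', relying on the same reduction you spell out, namely that $\bigcup Cov_X^{\star}=\tau_X^{\star}$ (noted in the remark following the definition of $Cov_X^{\star}$) and the convention that a partially topological space has a topological property precisely when its corresponding topological space does. Your write-up just makes these bookkeeping steps explicit.
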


\begin{remark}
Similarly to the situation in  Remark 2.4,  we have that, in general,  $Cov_X^{\star}\vert Y$ need not be equal to $(Cov_X \vert Y)^{\star}$.
\end{remark}

Although it can be said more about compact complement partial topologies, let us finish with the following example:

\begin{example}\label{example2-1}
Consider the partially topological real lines considered in
Definition 1.2 of  \cite{PW}: $\mathbb{R}_{st}=(\mathbb{R},Cov_{st}), \mathbb{R}_{lst}=(\mathbb{R},Cov_{lst}), \mathbb{R}_{l^+st}=(\mathbb{R},Cov_{l^+st})$. Let $I$ be a  bounded interval of $\mathbb{R}$.
Then we get the following  equalities of the induced partial topologies:
 $Cov_{st}\vert I=Cov_{lst}\vert I=Cov_{l^+st}\vert I=
Cov_{st}^{\star}\vert I=Cov_{lst}^{\star}\vert I=
Cov_{l^+st}^{\star}\vert I$.
\end{example}


\end{document}